\PassOptionsToPackage{reqno}{amsmath}
\documentclass[pdftex]{amsart}

\title{Semitoric systems with $2n$ Focus-Focus singularities}
\author{Haruhisa Mura}
\date{}
\usepackage{amssymb}
\usepackage{amsthm}
\usepackage{hyperref
}
\usepackage{stmaryrd}
\usepackage{physics}
\usepackage{tikz}
\usepackage{tikz-cd}
\usepackage{graphicx}

\makeatother
\usepackage{nicematrix}
\usepackage{enumitem}
\usepackage{etoolbox}
\usepackage[normalem]{ulem}
\usepackage{bm}
\usepackage{arydshln}
\numberwithin{equation}{section}
\newcommand{\itemeq}{
\refstepcounter{equation}
\hfill (\theequation)
}

\newtheorem{theorem}{Theorem}[section]
\newtheorem{lemma}[theorem]{Lemma}
\theoremstyle{definition}
\newtheorem{definition}[theorem]{Definition}

\newtheorem{proposition}[theorem]{Proposition}
\theoremstyle{remark}

\NiceMatrixOptions{xdots/line-style=dotted}
\NiceMatrixOptions{
  custom-line = {
    command = myhdottedline, 
    letter = :,              
    tikz = {                 
      line cap=round, 
      dash pattern=on 0pt off 2pt, 
      line width=0.6pt
    }
  }
}

\allowdisplaybreaks
\begin{document}
\begin{abstract}
In this paper, we provide new explicit examples of semitoric systems. More precisely, for any natural number $n$, we give a semtioric moment map $F_n$ with $2n$ focus-focus singularities defined on a symplectic manifold $(M_n,\omega_n)$ of diffeomorphism type $\mathbb{CP}^2\sharp(2n+1)\overline{\mathbb{CP}}{}^2$.
\end{abstract}
\maketitle

\section{Introduction}
An \textit{integrable Hamiltonian system} of $m$-degrees of freedom (hereafter referred to as an integrable system) is a pair consisting of a $2m$-dimensional symplectic manifold $(M,\omega)$ and a map $F:M\longrightarrow\mathbb{R}^m$ called a moment map, where the moment map $F$ is regular almost everywhere, and all of its component functions are pairwise Poisson commutative. Classically known examples of integrable systems include the Kepler problem, the two-body problem in celestial mechanics. In quantum mechanics, examples include the coupled angular momenta and the Jaynes-Cummings model. (For details, see, for example, \cite{babelon2009semi}, \cite{perelomov1990integrable}, and \cite{sadovskii1999monodromy}.)

When the moment map is proper, the set of flows generated by the Hamiltonian vector fields of each component function defines an $\mathbb{R}^m$-action on the symplectic manifold. An integrable system on a compact symplectic manifold is called a \textit{toric system} if the $\mathbb{R}^m$-action induces an effective $\mathbb{R}^m/\mathbb{Z}^m$-action. A symplectic manifold equipped with the structure of a toric system is called a \textit{symplectic toric manifold} (hereafter referred to as a \textit{toric manifold}). 
In the 1980s, building on the convexity theorem by Atiyah \cite{Atiyah} and Guillemin-Sternberg \cite{GScovexity}, Delzant \cite{Delzant} demonstrated a one-to-one correspondence between the isomorphism classes of toric systems and the isomorphism classes of Delzant polytopes. Now, this theorem is known as Delzant theorem, and it connects K\"{a}hler geometry and symplectic geometry with combinatorics. (For instance, Hamilton \cite{geomquantiontoric} discussed the relationship between the geometric quantization of toric manifolds and the number of lattice points in the corresponding Delzant polytopes.)

In the late 2000s, Pelayo and V{\~u} Ng{\d{o}}c \cite{pelayo2009semitoric} introduced \textit{semitoric systems} as a generalization of toric systems. Roughly speaking, 
a semitoric system is an integrable system of $2$-degrees of freedom $(M,\omega,F=(J,H):M\longrightarrow\mathbb{R}^2)$ that satisfies the following conditions:
\begin{itemize}
\item The function $J$ is proper, and the flow generated by the Hamiltonian vector field of $J$ is periodic;
\item
The moment map $F$ admits the singularities of toric moment maps and \textit{focus-focus} singularities.
\end{itemize}
For the precise definition, see Definition \ref{dfn:semitoric}.
Pelayo and V{\~u} Ng{\d{o}}c \cite{pelayo2009semitoric} revealed that semitoric systems of a special type, called \textit{simple} semitoric systems, are classified by five invariants. Subsequently, they constructed all simple semitoric systems (up to isomorphism) from these invariants in \cite{pelayo2011constructing}. Furthermore, in \cite{palmer2024semitoric}, they extended these results to the general case, which is not restricted to the simple type. 

One of the crucial points of Delzant theorem is that the method for reconstructing a toric system from a Delzant polytope is extremely simple and explicit. This method is now called the  Delzant construction, and it can explicitly describe all toric systems (up to isomorphism). On the other hand, the construction methods for semitoric systems in \cite{palmer2024semitoric} and \cite{pelayo2011constructing} rely on the gluing of local models; therefore, in contrast to the Delzant construction, the resulting semitoric systems do not have explicit representations. Moreover, explicit examples of semitoric systems with focus-focus singularities are relatively scarce (specific examples with one focus-focus singularity can be found in \cite{chiscop2019lagrangian}; with two in \cite{semitoricex2},\,\cite{semitoricex1},\,\cite{le2019symplectic},\,\cite{pentagonspaces},\,and \cite{sadovskii1999monodromy}; and with four in \cite{Octagon}).

The purpose of this paper is to provide new explicit examples of semitoric systems with focus-focus singularities. Here, we outline the construction of concrete examples. First, for any natural number $n$, we construct a Delzant polytope $\Delta_n$ with $2n+4$ vertices. For the precise definition of $\Delta_n$, see (\ref{Delta_n}). 
Then, we apply the Delzant construction to $\Delta_n$ to obtain a toric system $(M_n, \omega_n, \mu_n)$.  
Here, the toric manifold $(M_n, \omega_n)$ is constructed as a quotient of a certain subset $v^{-1}(0)$ of $\mathbb{C}^{2n+4}$, and its diffeomorphism type is the $(2n+1)$-fold blow-up of $\mathbb{CP}^2$. Next, we briefly outline the idea for constructing the desired moment map $F_n:M_n\longrightarrow\mathbb{R}^2$. We remark that this idea is a generalization of the one found in the literature \cite[Chapter~6]{semitoricfamilies}. 
We define a complex-valued function $f_n$ on $\mathbb{C}^{2n+4}$ by $f_n=z_1\cdots z_{2n}\bar{z}_{2n+3}\bar{z}_{2n+4}$.
Then, $f_n$ induces a function $\widehat{f}_n$ on $M_n$. The function $f_n$ is ``degenerate'' at points in $\mathbb{C}^{2n+4}$ with three or more zero coordinates, but the set $v^{-1}(0)$ does not contain such points. Therefore, $f_n$ is ``non-degenerate'' on the set $v^{-1}(0)$, and thus $\widehat{f}_n$ is also ``non-degenerate''. Now, we choose an appropriate Hamiltonian $S^1$-function $J_n$ using the toric moment map $\mu _n$. such that $f_n$ is invariant under the Hamiltonian $S^1$-action generated by $J_n$. Finally, we define a map $F_n$ by $F_n=(J_n, \text{Re}\,\widehat{f}_n)$. By examining the types of singularities using the method of Lagrange multipliers, we obtain the following result.

\begin{theorem}(Theorem~\ref{mainthm})
  For any natural number $n$, the tuple $(M_n, \omega_n, F_n)$ is a semitoric system with $2n$ focus-focus singularities.
\end{theorem}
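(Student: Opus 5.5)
The argument has four stages: set up the quotient concretely, check the periodicity and properness conditions on $J_n$, locate the critical points of $F_n$ with Lagrange multipliers, and then classify each critical point as toric type or focus-focus type.

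\textbf{Setting up the quotient.} I will work upstairs on $v^{-1}(0)\subset\mathbb{C}^{2n+4}$, where $(M_n,\omega_n)$ is the symplectic quotient by the torus $K=T^{2n+2}$ coming from the Delzant construction of $\Delta_n$. The facet normals of $\Delta_n$ determine the weights of $K$. The first step is to check that $f_n=z_1\cdots z_{2n}\bar z_{2n+3}\bar z_{2n+4}$ is $K$-invariant, so that it descends to $\widehat f_n$. I also choose $J_n=\langle\mu_n,\xi\rangle$ for an integral primitive vector $\xi$ such that the corresponding circle acts trivially on $f_n$. Concretely, the combined weight of $z_1\cdots z_{2n}$ should equal that of $z_{2n+3}z_{2n+4}$. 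Since $J_n$ is a component of a toric moment map on a compact manifold, it is proper and generates an effective $S^1$-action. This disposes of the first condition in Definition~\ref{dfn:semitoric}. Poisson commutativity $\{J_n,\operatorname{Re}\widehat f_n\}=0$ follows from the $S^1$-invariance of $f_n$.

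\textbf{Locating the critical points.} A point $[z]$ is critical for $F_n$ exactly when, upstairs, $d(\operatorname{Re} f_n)$ is a linear combination of $dJ_n$ and the differentials of the components of $v$, restricted to $v^{-1}(0)$. The Lagrange multiplier system is then split according to the number of vanishing coordinates, which is at most two on $v^{-1}(0)$.

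\emph{No zero coordinates.} Here $|z_j|\neq0$ for all $j$, and the equations force $f_n$'s gradient to be tangent to the orbit directions. I expect this to yield no critical points, or only rank-one points along the $J_n$-orbits. They must be checked to be elliptic-regular, i.e.\ the Hessian of $\operatorname{Re}\widehat f_n$ restricted to the reduced transversal is definite.

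\emph{Exactly one zero coordinate.} If the zero is among $z_1,\dots,z_{2n},z_{2n+3},z_{2n+4}$, then $f_n$ vanishes to first order. The multiplier equations then decide whether we are on a regular point or on a rank-one elliptic-regular point of the corresponding edge.

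\emph{Two zero coordinates.} These are the $2n+4$ vertices of $\Delta_n$, which are the fixed points of the torus action. Since $f_n$ vanishes to order two at vertices where both zeros lie among the coordinates appearing in $f_n$, these are the candidates for focus-focus points. The expectation is that exactly $2n$ vertices have both vanishing coordinates in $\{z_1,\dots,z_{2n},z_{2n+3},z_{2n+4}\}$ with one from each group, so that locally $f_n\approx c\,z_i\bar z_j$. The remaining vertices should give elliptic-elliptic points.

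\textbf{Classifying the singularities.} At each vertex I will use the toric charts: the two vanishing coordinates become local Darboux coordinates $(w_1,w_2)$, and the other coordinates are fixed nonzero up to the $K$-action. In these coordinates $J_n$ is a quadratic form $a|w_1|^2+b|w_2|^2$ with $a,b$ read off from $\xi$, and $\operatorname{Re}\widehat f_n$ has quadratic part $\operatorname{Re}(c\,w_1\bar w_2)$, or $\operatorname{Re}(c\,w_1w_2)$ after conjugation conventions. When $a=-b$, the pair $(|w_1|^2-|w_2|^2,\operatorname{Re}(w_1\bar w_2))$ is the standard focus-focus model, up to a linear change, once the Williamson criterion is applied: the span of the Hessians is a Cartan subalgebra of focus-focus type. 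When the quadratic part of $\operatorname{Re} f_n$ vanishes, the Hessian of $F_n$ is governed by $J_n$ together with a higher-order term, and I must instead show the point is elliptic-elliptic.

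\textbf{Main obstacle.} The hardest step is the case analysis of the Lagrange multiplier equations on the non-vertex strata, together with verifying nondegeneracy there. It must be shown that no extra degenerate critical points appear, for instance hyperbolic ones or degenerate ones where $\operatorname{Re} f_n$ has a critical level inside an $S^1$-orbit family. My first attempt is to write $z_j=r_je^{i\theta_j}$, use the $K$-invariance to reduce to the amplitudes $r_j$ and a single phase $\theta=\arg f_n$, and observe that $d\operatorname{Re} f_n$ has a component $-|f_n|\sin\theta\,d\theta$. This component must vanish at a critical point, which forces $\sin\theta=0$ or $f_n=0$. I would then reduce the radial equations to the derivative of the product $r_1\cdots r_{2n}r_{2n+3}r_{2n+4}$ along a one-dimensional level set of $J_n$ in the polytope, and show that this product has a unique nondegenerate maximum on the interior of each such level segment, using log-concavity of the affine functions defining it. That maximum gives the elliptic-regular points, and combined with the vertex analysis this gives exactly $2n$ focus-focus singularities.
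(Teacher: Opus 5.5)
There is a genuine gap at the facets with normals $N_s^{\pm}=\pm(1,1)$, i.e.\ at the coordinates $z_{2n+1},z_{2n+2}$, which do not occur in $f_n$.

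Since $J_n=\langle\mu_n,(1,1)\rangle$, these facets are exactly the extremal level sets $J_n=\pm n$. Their preimages $\Sigma_\pm$ (the closures of $M^{2n+1}$ and $M^{2n+2}$) are $2$-spheres fixed pointwise by the $S^1$-action, so $dJ_n\equiv 0$ along them and every point of $\Sigma_\pm$ is a singularity of $F_n$. Your one-zero case only treats zeros among $z_1,\dots,z_{2n},z_{2n+3},z_{2n+4}$, and your log-concavity step only runs over interior level segments. So this two-dimensional family of singularities is never examined, and its non-degeneracy (with $L$ spanned by $X_{H_n}$, not $X_{J_n}$) is never proved. It is also where the elliptic-elliptic points actually are: two on each sphere, cut out by $\operatorname{Im} f_n=0$ and $\frac{1}{|z_{2n+3}|^2}+\frac{1}{|z_{2n+4}|^2}-\sum_{j\le 2n}\frac{1}{|z_j|^2}=0$.

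Moreover, your prediction that the four remaining vertices $[p^{n,2n+1}]$, $[p^{2n,2n+2}]$, $[p^{2n+1,2n+3}]$, $[p^{2n+2,2n+4}]$ are elliptic-elliptic is false. At each of them one vanishing coordinate is $z_{2n+1}$ or $z_{2n+2}$, so $f_n$ vanishes only to first order, in the other coordinate (which has $J_n$-weight $0$). Hence $dH_n\neq 0=dJ_n$, and these are rank-one elliptic-regular points. Your step ``show the point is elliptic-elliptic via a higher-order term'' cannot be carried out, and the resulting singularity table would be wrong.

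Your local model at the focus-focus vertices is also misstated: none of the $2n$ such vertices has one zero coordinate from each group. They are $(j,j+1)$ and $(n+j,n+j+1)$ for $1\le j\le n-1$, plus $(1,n+1)$, where both coordinates are holomorphic factors of $f_n$, and $(2n+3,2n+4)$, where both are antiholomorphic. So the quadratic part is $c\,w_1w_2$ or its conjugate, as $S^1$-invariance under weights $\pm(1,-1)$ forces; the pair $(|w_1|^2-|w_2|^2,\operatorname{Re}(w_1\bar w_2))$ does not even Poisson-commute. With the $w_1w_2$ model your Williamson argument works and agrees with the paper's roots $\pm c\pm i$.

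The missing case can be closed with your own tools:
\begin{itemize}
\item On the facet segment $x+y=\pm n$, the product of the affine functions of the $f_n$-facets vanishes at both ends and is strictly log-concave. Hence $H_n|_{\Sigma_\pm}=|f_n|\cos\theta$ has exactly two critical points, both non-degenerate extrema.
\item $S^1$-invariance kills the Hessian cross terms between $T\Sigma_\pm$ and the normal plane. So $d^2J_n$ (definite on the normal plane, zero on $T\Sigma_\pm$) and $d^2H_n$ (definite on $T\Sigma_\pm$ at those two points) span an elliptic-elliptic pair.
\item At every other point of $\Sigma_\pm$, including the four vertices above, $L^\omega/L$ is the normal plane, on which $d^2J_n$ is definite, giving elliptic-regular.
\end{itemize}

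Your treatment of the interior is correct: reducing to $(s,\theta)$, forcing $\sin\theta=0$, and using strict log-concavity along $J_n$-levels reproduces exactly the paper's condition for rank-one interior points, and is cleaner than the paper's explicit Lagrange-multiplier Hessians. Finally, you should also record density of regular points, which the paper deduces from non-degeneracy.
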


\subsection*{Structure of the paper}
\begin{description}
\item[Section~\ref{chapter2}] First, we recall the definitions of basic concepts and  important theorems. At the end of this section, we briefly review the Delzant construction.

\item[Section~\ref{chapter3}] We explicitly construct the Delzant polytope $\Delta_n$. Then, we apply the  Delzant construction to $\Delta_n$ to obtain the toric manifold $(M_n, \omega_n)$.

\item[Section~\ref{chapter4}] 
In this section, we define the moment map $F_n=(J_n,H_n):M_n\longrightarrow\mathbb{R}^2$ on the toric manifold $(M_n,\omega_n)$, and prove the main result (Theorem~\ref{mainthm}) assuming Proposition~\ref{prop:singtype}.
  \item[Section~\ref{chapter5}] In this final section, we prove Proposition \ref{prop:singtype}. Namely, we examine the singularities of the map $F$ and their types.
\end{description}

\subsection*{Acknowledgements} The author would like to express sincere gratitude to Professor Takahiro Oba for  providing numerous helpful suggestions regarding the writing of this paper. This work was supported by JST SPRING, Grant Number JPMJSP2138.

\section{Fundamental Definitions and Important Results}\label{chapter2}
In this section, we recall some of the fundamental definitions and results related to integrable systems. 
\begin{definition}
Let $(M,\omega)$ be a $2m$-dimensional symplectic manifold and $F=(f_1,\ldots,\allowbreak f_m):M\longrightarrow\mathbb{R}^{m}$ be a smooth map. The triple $(M, \omega, F)$ is called an \textbf{integrable system} with $m$-degrees of freedom if the map $F$ is regular on a dense subset of $M$, and the Poisson bracket of $f_i$ and $f_j$ vanishes for all $i,j=1,\ldots,m$. The map $F$ is called the \textbf{moment map}.
\end{definition}
In what follows, we assume that all moment maps are proper.

A representative class of integrable systems is the toric system. Let us recall its definition. In the following, let $\mathfrak{t}$ be the Lie algebra of a torus $T$.
\begin{definition}
Let $(M,\omega,T,\mu)$ be a \textbf{Hamiltonian $T$-space}. That is, a torus $T$ acts symplectically on a symplectic manifold $(M,\omega)$, and the map $\mu:M\longrightarrow \mathfrak{t}^*$, called the \textbf{moment map}, satisfies the following conditions:
  \begin{itemize}
    \item For any $X\in \mathfrak{t}$, it holds that $d\langle \mu, X\rangle =\iota_{X^\#}\omega$, where $X^\#$ is the fundamental vector field on $M$ generated by $X$;
    \item The map $\mu$ is $T$-invariant.
  \end{itemize}
In this case, the torus action is called a \textbf{Hamiltonian $T$-action}.
A Hamiltonian $T$-space $(M,\omega,T, \mu)$ is called a \textbf{toric system} if the torus action is effective and the dimension of the acting torus is half that of $M$. We often omit the torus $T$ from the notation for a toric system as it is uniquely determined. A symplectic manifold that admits the structure of a toric system is called a \textbf
{toric manifold}.
\end{definition}
Let $(M,\omega, F)$ be an integrable system. A point $p\in M$ is a \textbf{singularity} if the differential of $F$ at $p$, denote by $dF_p$, does not have maximal rank.
Next, we recall the nondegeneracy and the types of singularities in integrable systems. 
Let $p\in M$ be a singularity of rank $i$. Then, there exists $Q\in \text{GL}(m, \mathbb{R})$ such that by setting $(g_1, g_2, \ldots, g_m) = Q\circ (f_1, f_2, \ldots, f_m)$, it holds that $dg_1(p) = dg_2(p) = \cdots = dg_{m-i}(p) = 0$. In this case, the functions $g_1,\ldots,g_{m-i}$ define the following isotropy representation:
\[
\begin{tikzcd}[nodes={anchor=base}]
   \Psi:&[-30pt]\mathbb{R}^{m-i} \arrow[r] & \text{Sp}(T_pM, \omega_p) \\[-2ex]
   & (t_1, t_2, \ldots, t_{m-i}) \arrow[u, "\in" {rotate=90, anchor=center}, phantom] \arrow[r, "\longmapsto", phantom] & \left(d\psi^{g_1}_{t_1}\right)_p \circ \cdots \circ \bigl(d\psi^{g_{m-i}}_{t_{m-i}}\bigr)_p\text{\raisebox{-5pt}{,}} \arrow[u, "\in" {rotate=90, anchor=center}, phantom]
\end{tikzcd}
\]
where $\psi^{g_j}_{t_j}$ is the symplectomorphism on $M$ obtained as the time-$t_j$ map of the flow generated by the Hamiltonian vector field of $g_j$.
Now, for $j=1,\ldots,m-i$, let $A_j \in \text{End}(T_pM)$ be defined as $A_j = d\Psi_0\left(\left. \pdv{}{t_j}\right|_0\right)$. Then, on a chart $(U, \phi)$ containing the point $p$, it holds that $A_j = d_p^2G_j\cdot\Omega^{-1}_p$. Here, $d_p^2G_j$ and $\Omega_p$ are the matrix representation of the Hessian $d_p^2g_j$ and the symplectic form $\omega_p$ on the chart $(U,\phi)$, respectively. Let $L_p\subset T_pM$ be the subspace of $T_pM$ defined as $L_p = \langle X_{g_{m-i+1}}(p), X_{g_{m-i+2}}(p), \ldots, X_{g_m}(p)\rangle$. Then, since the Hessians $d^2_pg_j$ vanishes on $L_p$, it induces a bilinear form $\widehat{d^2_pg_j}$ on $L_p^\omega/L_p$. On the other hand, since $L_p$ is an isotropic subspace of $(T_pM,\omega_p)$, it naturally induces a linear symplectic form $\widehat{\omega}_p$ on $L_p^\omega/L_p$.
\begin{definition}\label{def:nondegeneracy}
A singularity $p$ of rank $i$ of an integrable system $(M,\omega,F)$ with $m$-degrees of freedom is said to be \textbf{non-degenerate} if it satisfies the following two conditions:
  \begin{itemize}
    \item The bilinear forms $\widehat{d_p^2g_1}, \ldots, \widehat{d_p^2g_{m-i}}$ are linearly independent;
    \item There exist real numbers $a_1,\ldots,a_{m-i}$ such that \textbf{the characteristic equation}\vspace{3mm}
  \begin{equation}\label{chreq}
    P(t) = \det(\sum_{j=1}^{m-i}a_j\widehat{d_p^2G_j}\widehat{\Omega}^{-1}_p-tI) = 0\\[2ex]
  \end{equation}
  has $2(m-i)$ non-zero roots. Here, $\widehat{d_p^2G_j}$ and $\widehat{\Omega}_p$ are representation matrices of $\widehat{d_p^2g_j}$ and $\widehat{\omega}_p$, respectively and $I$ is the identity matrix. 
  \end{itemize}
\end{definition}
If a singularity $p$ is non-degenerate, the solution set of the characteristic equation (\ref{chreq}) is divided into the following three types of blocks:
\begin{itemize}
\item \textbf{(Elliptic block)} a pair of purely imaginary roots $(i\alpha, -i\alpha)$; \notag
\item \textbf{(Hyperbolic block)} a pair of real roots $(\beta, -\beta)$; \itemeq \label{chtype1}
\item \textbf{(Focus-Focus block)} a quadruple $(\alpha+i\beta, \alpha-i\beta, -\alpha+i\beta, -\alpha-i\beta)$.\notag 
\end{itemize}
\begin{definition}\label[(Pelayo, V\~{u} Ng\d{o}c \cite{pelayo2009semitoric})\label{dfn:semitoric}
  A \textbf{semitoric system} is an integrable system with $2$-degrees of freedom $(M,\omega, F=(J,H))$ which satisfies the following conditions:
  \begin{itemize}
    \item The maps $F$ and $J$ are proper;
    \item The function $J$ generates an effective Hamiltonian $S^1$-action;
    \item All singularities are non-degenerate, and there are no hyperbolic blocks among the solutions to their characteristic equations.
  \end{itemize}
\end{definition}
In the $2$-degrees of freedom integrable systems, non-degenerate singularities with no hyperbolic blocks
are classified into the following three types:
\begin{itemize}
  \item \textbf{ER(Elliptic-Regular)} rank\,$1$ with an elliptic block;
  \item \textbf{EE(Elliptic-Elliptic)} rank\,$0$ with two elliptic blocks; \itemeq \label{chtype2}
  \item \textbf{FF(Focus-Focus)} rank\,$0$ with a focus-focus block.
\end{itemize}
As shown in the following theorem, if a non-degenerate singularity has no hyperbolic blocks,
 a neighborhood of the singularity can be identified with a standard model determined by the types of blocks in the solution set of the characteristic equation. Since this paper focuses on integrable systems with $2$-degrees of freedom, we restrict the statement of the orem to that case. The statement for general degrees of freedom is analogous; see for example, \cite{eliasson1990normal}.
\begin{theorem}\label[(\cite{eliasson1990normal})
Let $(M,\omega,F)$ be an integrable system with $2$-degrees of freedom, and let $p\in M$ be a non-degenerate singularity with no hyperbolic blocks. Then, there exist a symplectic coordinate neighborhood $\bigl(U,\omega = dp_1\wedge dq_1+dp_2\wedge dq_2;$\linebreak$\Phi=(p_1,q_1,p_2,q_2)\bigr)$ centered at $p$ and a diffeomorphism $\phi:\mathbb{R}^2(F(p))\longrightarrow \mathbb{R}^2(0)$ such that $\phi \circ F = (g_1, g_2)\circ \Phi$. Here, $\mathbb{R}^2(F(p))$ is an open neighborhood of $F(p)$, $\mathbb{R}^2(0)$ is an open neighborhood of $0\in \mathbb{R}^2$, and $(g_1,g_2)$ is given as follows:  
\begin{itemize}
\item If $p$ is of ER type, then $(g_1, g_2) = (q_1,\,p_2^2+q_2^2)$;
\item If $p$ is of EE type, then $(g_1, g_2) = (p_1^2+q_1^2,\,p_2^2+q_2^2)$;
\item If $p$ is of FF type, then $(g_1, g_2) = (p_1q_1+p_2q_2,\,p_1q_2-p_2q_1)$.
\end{itemize}
\end{theorem}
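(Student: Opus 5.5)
The plan is to proceed in three stages: a linear (quadratic) normal form, a formal normal form, and finally a smooth normal form in which the flat remainders are removed. The first stage is to reduce the rank~$1$ case to a rank~$0$ problem with one degree of freedom. If $p$ is of ER type, some combination $g$ of the components of $F$ has $dg_p\neq 0$. By the Darboux--Carath\'eodory theorem we choose symplectic coordinates $(p_1,q_1,p_2,q_2)$ with $q_1=g$. Since the other component Poisson commutes with $q_1$, it is independent of $p_1$. We may therefore treat it as a $q_1$-dependent family of functions of $(p_2,q_2)$, each having a non-degenerate elliptic critical point near the origin. The parametric one-degree-of-freedom Morse lemma (the symplectic, i.e.\ area-preserving, version) then produces $p_2^2+q_2^2$ up to a diffeomorphism of the target. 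This case is comparatively routine, so the main work lies at rank $0$.

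For rank $0$, the first step is linear algebra. The Hessians $d^2_pf_1$ and $d^2_pf_2$ span a two-dimensional abelian subalgebra of $\mathfrak{sp}(4,\mathbb{R})$ under the identification of Definition~\ref{def:nondegeneracy}. Non-degeneracy says this subalgebra is a Cartan subalgebra. Williamson's classification of such subalgebras, combined with the root structure (\ref{chtype1}), gives a linear symplectic change of coordinates and a matrix $Q\in \mathrm{GL}(2,\mathbb{R})$ after which the quadratic parts become exactly the model pairs $(p_1^2+q_1^2,\,p_2^2+q_2^2)$ or $(p_1q_1+p_2q_2,\,p_1q_2-p_2q_1)$. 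The hyperbolic case is excluded by hypothesis.

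The second step is a Birkhoff-type formal normalization. Order by order, we solve the homological equation by means of symplectomorphisms generated by Hamiltonians of increasing degree. The commutation $\{f_1,f_2\}=0$ then forces each Taylor coefficient of $F$ to be a formal function of the quadratic model functions. This gives formal coordinates in which $F$ is a formal series in $(g_1,g_2)$. Formal inversion of that series yields the Taylor series of $\phi$. Borel's lemma then realizes this formal transformation by a genuine local symplectomorphism and a diffeomorphism $\phi$, up to an error that is flat at $p$.

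The final step, which I expect to be the main obstacle, is removing the flat error; this is the genuinely analytic content of Eliasson's theorem. In the EE case we use that the model generates a Hamiltonian $T^2$-action. We average over this action and apply a Moser path argument ($\omega_t$ interpolation), which absorbs flat terms because the torus is compact. In the FF case the model generates only an $S^1\times\mathbb{R}$-action with non-compact orbits, so averaging is unavailable. Here I would first construct the $S^1$-action intrinsically, using that $p_1q_2-p_2q_1$ has periodic flow and an equivariant Moser argument as above. I would then solve the remaining cohomological equation for the $\mathbb{R}$-part, $X_{g_1}(h)=$ flat, by integrating along the hyperbolic flow. Flatness at the origin guarantees convergence and smoothness of the solution on each of the stable/unstable sectors, and these solutions must then be glued across the sectors. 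Checking that the glued solution is smooth near the singular fiber is the delicate point; this is where I would follow the approach of \cite{eliasson1990normal} and its later corrections in the literature rather than reprove every estimate.
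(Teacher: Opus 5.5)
The paper does not prove this statement. It quotes it from \cite{eliasson1990normal} as background, so your proposal can only be compared with the literature it cites. Your outline follows the standard route: Williamson normal form of the Cartan subalgebra, Birkhoff normalization to a formal function of $(g_1,g_2)$, Borel realization up to a remainder flat at $p$, and then removal of that remainder. By deferring the flat-remainder analysis, you end up where the paper does.

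However, the step you present as settled in the EE case would fail as written. After the Borel step you have $\phi\circ F\circ\Phi^{-1}=(g_1,g_2)+R$, and $R$ is flat \emph{only at $p$}, not along the two elliptic-regular planes through $p$. This function is not invariant under the model $T^2$-action. Averaging over that action therefore just replaces $F$ by a different function; it does not produce a conjugating symplectomorphism. A Moser path argument also needs something already exactly invariant to work with: either a torus action preserving $F$, or a diffeomorphism carrying the singular foliation of $F$ onto the model foliation. You have neither.

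The missing idea is the one you yourself invoke in the FF case: build the action intrinsically from $F$. For instance, show that the action integrals (equivalently, the period lattice) of $F$ extend smoothly up to the corner value $F(p)$. You can use the ER normal form along the edges of the image and the flat estimate at the corner. Then some $I=\psi\circ F$ generates a genuine $2\pi$-periodic Hamiltonian $T^2$-action fixing $p$. Only at that stage do averaging and an equivariant Moser argument linearize the action and turn $I$ into $(p_1^2+q_1^2,\,p_2^2+q_2^2)$.

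There are two smaller points.

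\begin{itemize}
\item \textbf{ER lift.} A $q_1$-dependent family $\chi_{q_1}$ of area-preserving maps of the $(p_2,q_2)$-plane is not a symplectomorphism of the $4$-dimensional chart. Write $\chi^*(dp_2\wedge dq_2)=dp_2\wedge dq_2+dq_1\wedge\beta$. You must compose with $p_1\mapsto p_1+a(q_1,p_2,q_2)$, where $a$ is a primitive of $\beta$ on each slice $\{q_1=\mathrm{const}\}$. Such an $a$ exists because $\beta$ is closed on the slices and the slices are discs. The correction preserves $q_1$ and the $p_1$-independence of the second function.
\item \textbf{Citation.} \cite{eliasson1990normal} treats only the elliptic case. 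The complete smooth normal form at a focus-focus point is due to later work, notably V\~u Ng\d{o}c--Wacheux. That is where the FF step you defer actually rests, and the same applies to the FF case of the paper's own citation.
\end{itemize}
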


Before concluding this section, we briefly recall Delzant's construction, a method for constructing a toric manifold from a Delzant polytope. For details, see for example, \cite{Delzant} and \cite{Cannas}.

Let $\Delta \subset \mathbb{R}^m$ be a Delzant polytope, i.e. $\Delta$ is a polytope in $\mathbb{R}^m$ which satisfies the following conditions:
\begin{itemize}
\item(Simplicity) Each vertex of $\Delta$ is the intersection of exactly $m$ edges;
  \item(Rationality) The edges meeting at a vertex $q$ are represented by the form $q+tu_i$ $(t\geq 0, u_i\in \mathbb{Z}^m)$;
  \item(Smoothness) For each vertex $q$, the above integer vectors $u_1,\ldots u_m$ form a basis for $\mathbb{Z}^m$.
\end{itemize}
By identifying $\mathbb{R}^m$ with its dual $(\mathbb{R}^m)^*$, we regard $\Delta$ as a subset of $(\mathbb{R}^m)^*$. Suppose that $\Delta$ has $k$ edges. Then, there exists a unique set of pairs $\{(N_j, c_j)\}_{j=1,\ldots, k}$ of an integer vector and a real number, up to ordering, satisfying the following two conditions:
\begin{itemize}
  \item $\Delta = \{\eta \in (\mathbb{R}^m)^*\mid \langle\eta, N_j \rangle \leq c_j, \text{ for all } j=1,\ldots, k\}$; 
  \item Each $j=1,\ldots, k$ and $m\in \mathbb{Z}$, $N_j/m\in \mathbb{Z} \iff m = \pm 1$.
\end{itemize}
Let $\Pi:\mathbb{R}^k\longrightarrow \mathbb{R}^m$ be the map defined by $\Pi(e_j) = N_j$ $(j=1,\ldots, k)$ where $e_1, \ldots, e_k$ are the standard basis for $\mathbb{R}^k$. Note that $\Pi$ is surjective because Delzant polytopes satisfy the smoothness condition. Moreover, since $\Pi(\mathbb{Z}^k)=\mathbb{Z}^m$, $\Pi$ descends to the map $\widehat{\Pi}:\mathbb{R}^k/\mathbb{Z}^k \longrightarrow \mathbb{R}^m/\mathbb{Z}^m$ so that $\exp \circ\, \Pi=\widehat{\Pi} \circ \exp$, where $\exp$ is the exponential map of tori. Let $\mathfrak{k}$ and $K$ be the kernel of $\Pi$ and $\widehat{\Pi}$, respectively. Note that $K$ is a Lie subgroup of $\mathbb{R}^k/\mathbb{Z}^k$ with Lie algebra $\mathfrak{k}$, and is isomorphic to a $(k-m)$-dimensional torus. Therefore, we obtain the following commutative diagram of two short exact sequences:
\begin{equation*}
\begin{tikzcd}
0 \arrow[r] & \mathfrak{k}\arrow[r, "\iota"] \arrow[d, "\exp"] & \mathbb{R}^k \arrow[r, "\Pi"] \arrow[d, "\exp"] & \mathbb{R}^m \arrow[r] \arrow[d, "\exp"] & 0 \\
1 \arrow[r] & K \arrow[r] & \mathbb{R}^k/\mathbb{Z}^k \arrow[r, "\widehat{\Pi}"] & \mathbb{R}^m/\mathbb{Z}^m \arrow[r] & 1
\end{tikzcd}
\end{equation*}
Here, $\iota$ is the inclusion. In addition, 
the dual of the first row in the diagram above yields the following short exact sequence:
\begin{equation}\label{eq:exactseq}
\begin{tikzcd}
0 \arrow[r] & (\mathbb{R}^m)^*\arrow[r, "\Pi^*"] & (\mathbb{R}^k)^* \arrow[r, "\iota^*"] & \mathfrak{k}^* \arrow[r] & 0
\end{tikzcd}
\end{equation}
Next, let $v_0:\mathbb{C}^k\longrightarrow \mathbb{R}^k$ be the map defined by 
\begin{gather*}
  v_0(z_1, \ldots, z_k) =\left(-\dfrac{1}{2}|z_1|^2+c_1, \ldots, -\dfrac{1}{2}|z_k|^2+c_k\right)
  \text{\raisebox{-8pt}{,}}
\end{gather*}
and let $v =\iota^*\circ v_0:\mathbb{C}^k\longrightarrow\mathfrak{k}^*$. Note that $v_0$ is the moment map for the Hamiltonian $\mathbb{R}^k/\mathbb{Z}^k$-action 
\begin{equation}\label{eq:std-action}
   \begin{tikzcd}
    T^k \times \mathbb{C}^k \arrow[r] & \mathbb{C}^k \\[-1ex]
    (t_1, \ldots, t_k, z_1, \ldots, z_k) \arrow[u, "\in"{rotate=90, anchor=center}, phantom] \arrow[r, "\longmapsto", phantom]& (e^{2\pi it_1}z_1, \ldots, e^{2\pi it_k}z_k) \arrow[u, "\in"{rotate=90, anchor=center}, phantom] 
   \end{tikzcd}
\end{equation} 
and $v$ is for the above Hamiltonian action restricted to $K$ and $v^{-1}(0)$.
   Since $\Pi^*$ is injective, a left inverse $(\Pi^*)^{-1}$ of $\Pi^*$ exists. Moreover, since $(\ref{eq:exactseq})$ is exact, we get $v_0(v^{-1}(0)) \subset \Pi^*((\mathbb{R}^m)^*)$, so the map $\mu=(\Pi^*)^{-1} \circ v_0\circ i:v^{-1}(0)\longrightarrow (\mathbb{R}^m)^*$ does not depend on the choice of a left inverse of $\Pi^*$, where $i:v^{-1}(0)\longrightarrow \mathbb{C}^k$ is the inclusion. 
Because the restriction of the action (\ref{eq:std-action}) to $K$ and $v^{-1}(0)$ is free and symplectic, the quotient space $v^{-1}(0)/K$ is a symplectic manifold endowed with a symplectic form $\omega_\Delta$ naturally induced from $\mathbb{C}^k$. Moreover, since the map $\mu$ is $K$-invariant, it induces the map $\widehat{\mu}: v^{-1}(0)/K\longrightarrow (\mathbb{R}^m)^*$ which satisfies $\widehat{\mu} \circ q = \mu$, where $q:v^{-1}(0)\longrightarrow v^{-1}(0)/K$ is the quotient map. In conclusion, the desired toric system is given by $(v^{-1}(0), \omega_\Delta, \widehat{\mu})$. The following diagram summarizes the relationships among the maps:
\begin{equation}\label{diagram:Delzant}
 \begin{tikzcd}
  v^{-1}(0)/K \arrow[rd, "\widehat{\mu}"']& v^{-1}(0) \arrow[l, "q"] \arrow[r, hook, "i"] \arrow[d, "\mu"] & \mathbb{C}^k \arrow[d, "v_0"] \arrow[rd, "v"] \\
  0 \arrow[r] & (\mathbb{R}^m)^*\arrow[r, "\Pi^*"] & (\mathbb{R}^k)^* \arrow[r, "\iota^*"] & \mathfrak{k}^* \arrow[r] & 0
 \end{tikzcd}
\end{equation}


\section{Construction of The Toric Systems \texorpdfstring{$(M_n,\omega_n, \mu_n)$}{}}\label{chapter3}
In this section, for any natural number $n$, we construct a Delzant polytope $\Delta_n$ and apply the Delzant construction to it to obtain a toric system $(M_n,\omega_n,\mu_n)$.
For the Delzant construction and some notation, see the end of the previous section.
  
  Let $\Delta_n$ be the Delzant polytope characterized by the following set of pairs of the integer vector and the real number:
  \begin{equation}\label{Delta_n}
    \Delta_n = \{(N_1^\pm, c_1^\pm), (N_2^\pm, c_2^\pm), \ldots, (N_n^\pm, c_n^\pm), (N_s^\pm, c_s^\pm), (N_x, c_x), (N_y, c_y)\}\subset \mathbb{Z}^2\times\mathbb{R},
  \end{equation}
where\vspace{-2mm}
\begin{align*}
  (N_j^+, c_j^+)&=\Biggl(              
  \begin{pmatrix}
    j-1\\j
  \end{pmatrix}
  \text{\raisebox{-7pt}{,}}
  \dfrac{j(j-1)}{2}\Biggr)\hspace{3mm}(1\leq j \leq n),\\
   (N_j^-, c_j^-)&=\Biggl(             
  \begin{pmatrix}
    -j\\1-j
  \end{pmatrix}
  \text{\raisebox{-7pt}{,}}
  \dfrac{j(j-1)}{2}\Biggr)\hspace{3mm}(1\leq j \leq n),\\
  (N_s^+, c_s^+) &=\Biggl(               
  \begin{pmatrix}
    1\\1
  \end{pmatrix}
  \text{\raisebox{-7pt}{,}}
  n\Biggr)
  \text{\raisebox{-7pt}{,}}
  \\
  (N_s^-, c_s^-) &=\Biggl(               
  \begin{pmatrix}
    -1\\-1
  \end{pmatrix}
  \text{\raisebox{-7pt}{,}}
  n\Biggr) \text{\raisebox{-7pt}{,}}
  \\
  (N_x, c_x) &=\Biggl(                  
  \begin{pmatrix}
    1\\0
  \end{pmatrix}
  \text{\raisebox{-7pt}{, }}
  1+\dfrac{n(n+1)}{2}\Biggr)
  \text{\raisebox{-7pt}{, }}
  \\
  (N_y, c_y) &=\Biggl(                  
  \begin{pmatrix}
    0\\-1
  \end{pmatrix}
  \text{\raisebox{-7pt}{, }}
  1+\dfrac{n(n+1)}{2}\Biggr)\text{\raisebox{-7pt}{.}}
\end{align*}
  Geometrically, $\Delta_n$ can be defined as follows (see Figure \ref{fig:arrangement}, \ref{Delta_3}). Let $\sigma_j$ be the square of side length $j$ for $j=1,\ldots,n$. First, we place $\sigma_1$ on $\mathbb{R}^2$ such that two of its sides lie on the positive $x-$axis and the negative $y-$axis. Next, suppose that $\sigma_{j-1}$ has been placed. Then we place $\sigma_j$ such that one of its diagonals lies on the line $y=-x$ and the intersection $\sigma_{j-1}\cap \sigma_j$ consists of a single point. We repeat this process up to $j=n$. Next, let $\sigma_n'$ be a copy of $\sigma_n$. We place $\sigma_n'$ at the position shifted from the placed $\sigma_n$ in the direction of the vector $\left(\begin{smallmatrix}1\\-1 \end{smallmatrix} \right)$. Then, the Delzant polytope $\Delta_n$ is the convex hull of the union of the placed squares $\sigma_1,\ldots,\sigma_n$, and $\sigma_n'$.

\begin{figure}
\centering 
\begin{tikzpicture}
    \draw[->, thick] (-0.5, 0) -- (7.5, 0) node[right] {$x$};
    \draw[->, thick] (0, -7.5) -- (0, 0.5) node[above] {$y$};
    \foreach \x in {0, ...,7} {
        \foreach \y in {-7, ...,0} {
        \pgfmathparse{\x==2 && \y==-2}
        \ifnum\pgfmathresult=0
            \fill[black!20] (\x,\y) circle (1.5pt);
            \fi
        }
    }
    \draw[thick, gray, fill=gray!50, opacity=0.7] 
        (0,0) -- (1,0) -- (1,-1) -- (0,-1) -- cycle;
        \node[scale=1.5] at (0.5, -0.5) {$\sigma_1$};
    \draw[thick, gray, fill=gray!50, opacity=0.7] 
        (1,-1) -- (3,-1) -- (3,-3) -- (1,-3) -- cycle;
        \node[scale=1.5] at (2, -2) {$\sigma_2$};
    \draw[thick, gray, fill=gray!50, opacity=0.7] 
        (3,-3) -- (6,-3) -- (6,-6) -- (3,-6) -- cycle;
        \node[scale=1.5] at (3.5, -3.5) {$\sigma_3$};
    \draw[thick, darkgray, fill=darkgray!50, opacity=0.7] 
        (4,-4) -- (7,-4) -- (7,-7) -- (4,-7) -- cycle;
        \node[scale=1.5] at (6.5, -6.5) {$\sigma_3'$};
\end{tikzpicture}
\caption{Arrangement of squares $(n=3)$}
\label{fig:arrangement}
\end{figure}

\begin{figure}
\centering 
\begin{tikzpicture}
    \draw[->, thick] (-0.5, 0) -- (7.5, 0) node[right] {$x$};
    \draw[->, thick] (0, -7.5) -- (0, 0.5) node[above] {$y$};
    \foreach \x in {0,1,2,3,4,5,6,7} {
        \foreach \y in {-7,-6,-5,-4,-3,-2,-1,0} {
            \fill[black!20] (\x,\y) circle (1.5pt);
        }
    }
    \draw[thick, blue, fill=blue!20, opacity=0.7] 
        (0,0) -- (1,0) -- (3,-1) -- (6,-3) -- (7,-4) -- (7,-7) -- (4,-7) -- (3,-6) -- (1,-3) -- (0,-1) -- cycle;
    \foreach \p in {(0,0),(1,0),(3,-1),(6,-3),(7,-4),(7,-7),(4,-7),(3,-6),(1,-3),(0,-1)} {
        \fill[red] \p circle (2pt);
    }
\end{tikzpicture}
\caption{The Delzant polytope $\Delta_3$. The red points are the vertices.}
\label{Delta_3}
\end{figure}

Next, we apply the Delzant construction to $\Delta_n$.
Let $\Pi:\mathbb{R}^{2n+4}\longrightarrow \mathbb{R}^2$ be the linear map defined by \vspace{2mm}
\begin{align*}
   \Pi(e_j)&= N_j^+\hspace{3mm}(1\leq j \leq n),\\
   \Pi(e_{n+j})&=N_j^-\hspace{3mm}(1\leq j \leq n), \\
   \Pi(e_{2n+1})&= N_s^+,\\
   \Pi(e_{2n+2})&= N_s^-,\\
   \Pi(e_{2n+3})&= N_x,\\
   \Pi(e_{2n+4})&=N_y\,.
\end{align*} 
Then, one can take $\{w_j^+, w_j^-\,(1\leq j\leq n), w_{2n+1}, w_{2n+2}\}$ as a basis for $\ker\Pi$, where the each vector is defined as follows:
\vspace{2mm}
\begin{align*}
   w_j^+ &= e_j+(j-1)e_{2n+2}+e_{2n+4} \hspace{3mm}(1\leq j\leq n), \\
   w_j^- &= e_{n+j}+(j-1)e_{2n+1}+e_{2n+3} \hspace{3mm} (1\leq j\leq n), \\
   w_{2n+1} &= e_{2n+1}+e_{2n+2}, \hspace{-23mm} & \\
   w_{2n+2} &= e_{2n+1}-e_{2n+3}+e_{2n+4}.
\end{align*}
The dual of the inclusion $\iota^*:(\mathbb{R}^{2n+4})^*\longrightarrow\mathfrak{k}^*$ has the following matrix representation with respect to the basis, \{$e_1^*, \ldots, e_{2n+4}^*$\} of $(\mathbb{R}^{2n+4})^*$ and $\{(w_j^+)^*_{1\leq j\leq n}, (w_j^-)^*_{1\leq j\leq n},\allowbreak (w_{2n+1})^*, (w_{2n+2})^*\}$ of $\mathfrak{k}^*$:\vspace{2mm}
\begin{equation*}
  \begin{matrix}
    \iota^* & = & 
    {\addtolength{\arraycolsep}{-3pt}
     \begin{pNiceMatrix}
    \Block{4-4}{{E^n}} &&&& \Vdots & \Block{4-4}{{O^n}} &&&& \Vdots & 0 & 0 & 0 & 1 \\
     &&&&     &&&&&     &0&1&0&1\\
     &&&&     &&&&&     & \vdots & \vdots & \vdots & \vdots \\
     &&&&     &&&&&     &0&n-1&0&1\\

     \myhdottedline \\
    
      \Block{4-4}{O^n} &&&&     & \Block{4-4}{E^n} &&&&     & 0 & 0 & 1 & 0 \\
     &&&&     &&&& &     & 1 & 0 & 1 & 0 \\
     &&&&     &&&& &     & \vdots & \vdots & \vdots & \vdots \\
     &&&&     &&&& &     & n-1 & 0 & 1 & 0 \\

    \myhdottedline \\

    \Block{2-4}{O^{2,n}} &&&&     & \Block{2-4}{O^{2,n}} &&&&     & 1 & 1 & 0 & 0 \\
     &&&&     &&&& &     & 1 & 0 & -1 & 1 
\end{pNiceMatrix}
}
  \end{matrix}\text{\raisebox{-75pt}{,}}
\end{equation*}
where $E^n$ is the $n$-by-$n$ identity matrix and $O^{j,k}$ is the $j$-by-$k$ zero matrix. (When $j=k$, we simply write $O^j$.) Therefore, the representation of the map $v=\iota^* \circ\,  v_0:\mathbb{C}^{2n+4}\longrightarrow\mathfrak{k}^*$ with respect to the above basis for $\mathfrak{k}^*$ is given by
\begin{align}\label{defofv}
  v(z_1, \ldots, z_{2n+4}) = & \sum_{j=1}^n\left(-\dfrac{1}{2}(|z_j|^2+(j-1)|z_{2n+2}|^2+|z_{2n+4}|^2)+c_j \right)(w_j^+)^* \\
   & + \sum_{j=1}^n\left(-\dfrac{1}{2}(|z_{n+j}|^2+(j-1)|z_{2n+1}|^2+|z_{2n+3}|^2)+c_j \right)(w_j^-)^* \\
   & + \left(-\dfrac{1}{2}(|z_{2n+1}|^2+|z_{2n+2}|^2)+2n\right)(w_{2n+1})^* \\
   & + \left(-\dfrac{1}{2}(|z_{2n+1}|^2-|z_{2n+3}|^2+|z_{2n+4}|^2)+n \right)(w_{2n+2})^*,
\end{align}
  where $c_j =\dfrac{1}{2}\left((2n+j)(j-1)+n(n+1)+2\right)$. Hence the subset $v^{-1}(0)\subset \mathbb{C}^{2n+4}$ is
determined by the following equations: 
\begin{align*}   
|z_j|^2+(j-1)|z_{2n+2}|^2+|z_{2n+4}|^2-2c_j &= 0 \hspace{3mm}(1\leq j\leq n),\\
|z_{n+j}|^2+(j-1)|z_{2n+1}|^2+|z_{2n+3}|^2-2c_j &= 0 \hspace{3mm}(1\leq j\leq n),\\
|z_{2n+1}|^2+|z_{2n+2}|^2-4n &= 0, \\
|z_{2n+1}|^2-|z_{2n+3}|^2+|z_{2n+4}|^2-2n &=0.
\end{align*}
Next, we describe the action of $K$ on $v^{-1}(0)$. We define a vector space isomorphism $\rho:\mathbb{R}^{2n+2}\longrightarrow\mathfrak{k}$ as follows:
\begin{align*}
\rho(e_j) &= w_j^+\hspace{2mm}(1\leq j\leq n),\\
\rho(e_{n+j}) &= w_j^-\hspace{2mm}(1\leq j\leq n),\\
\rho(e_{2n+1}) &= w_{2n+1},\\
\rho(e_{2n+2}) &= w_{2n+2}.
\end{align*}
Then, $\rho$ descends to a Lie group isomorphism $\widehat{\rho}:\mathbb{R}^{2n+2}/\mathbb{Z}^{2n+2}\longrightarrow K$. Therefore, $\widehat{\rho}$ induces the action of $\mathbb{R}^{2n+2}/\mathbb{Z}^{2n+2}$ on $v^{-1}(0)$.
Now, let $T_j\in \mathbb{R}^{2n+2}/\mathbb{Z}^{2n+2}$ be the element with $\frac{t_j}{2\pi}$ in the $j$-th entry and zeros elsewhere. Then, for $z=(z_1,\ldots,z_{2n+4})\in v^{-1}(0)$, the action of $T_j$ to $z$ is given as follows:
\begin{align*}
T_j\!\cdot\! z&=\bigl(z_1, \ldots, e^{it_j}z_j, \ldots, z_{2n}, z_{2n+1}, e^{i(j-1)t_j}z_{2n+2}, z_{2n+3}, e^{it_j}z_{2n+4} \bigr)\hspace{2mm}(1\leq j\leq n),\\
T_{n+j}\!\cdot\! z &=\bigl(z_1, \ldots, e^{it_{n+j}}z_{n+j}, \ldots, z_{2n}, e^{i(j-1)t_{n+j}}z_{2n+1}, z_{2n+2}, e^{it_{n+j}}z_{2n+3}, z_{2n+4} \bigr)\hspace{2mm}(1\leq j\leq n),\\
T_{2n+1}\!\cdot\! z &=\bigl(z_1, \ldots, z_{2n}, e^{it_{2n+1}}z_{2n+1}, e^{it_{2n+1}}z_{2n+2}, z_{2n+3}, z_{2n+4}\bigr),\\
T_{2n+2}\!\cdot\! z &=\bigl(z_1, \ldots, z_{2n}, e^{it_{2n+2}}z_{2n+1}, z_{2n+2}, e^{-it_{2n+2}}z_{2n+3}, e^{it_{2n+2}}z_{2n+4}\bigr).
\end{align*}
To obtain an expression for the moment map $\widehat{\mu}$, we determine $(\Pi^*)^{-1}$. The matrix representation of $\Pi$ with respect to the standard basis of $\mathbb{R}^{2n+4}$ and $\mathbb{R}^2$ is 
\begin{equation*}
     \Pi = 
     {\addtolength{\arraycolsep}{-3pt}
     \begin{pmatrix}
       0 & 1 & \ldots & n-1 & -1 & -2 & \ldots & -n & 1 & -1 & 1 & 0 \\
       1 & 2 & \ldots & n & 0 & -1 & \ldots & 1-n & 1 & -1 & 0 & -1
     \end{pmatrix}
     }\text{\raisebox{-8pt}{.}}
 \end{equation*}
Therefore, we obtain the following expression for $(\Pi^*)^{-1}$:
\vspace{5mm}
\begin{equation*}
     (\Pi^*)^{-1} =
     {\addtolength{\arraycolsep}{-3pt}
     \begin{pNiceMatrix}
       0 & \ldots & 0 & 1 & 0 \\
       0 & \ldots & 0 & 0 & -1
       \CodeAfter \OverBrace[yshift=3pt, shorten]{1-1}{1-3}{\scriptstyle 2n+2} 
     \end{pNiceMatrix}
     }\text{\raisebox{-8pt}{.}}
 \end{equation*}
Hence, the map $\mu = (\Pi^*)^{-1} \circ v_0 \circ i$ is given by 
\begin{equation*}
  \mu(z_1, \ldots, z_{2n+4}) = \left(-\dfrac{1}{2}|z_{2n+3}|^2+1+\dfrac{n(n+1)}{2}\text{\raisebox{-5pt}{,}}\hspace{1mm}\dfrac{1}{2}|z_{2n+4}|^2-1-\dfrac{n(n+1)}{2} \right)
  \text{\raisebox{-9pt}{.}}
\end{equation*}
In conclusion, the desired toric system $(M_n, \omega_n, \mu_n)$ is $(v^{-1}(0)/K, \omega_{\Delta_{n}}, \widehat{\mu})$.

\section{The Semitoric Systems \texorpdfstring{$(M_n,\omega_n, F_n)$}{}}\label{chapter4}
Let $\tilde{J}_n, f_n, \tilde{H}_n$ be the functions on $\mathbb{C}^{2n+4}$ defined by
\begin{align}
  \tilde{J}_n (z_1,  \ldots, z_{2n+4}) & = -\dfrac{1}{2}\left(|z_{2n+3}|^2-|z_{2n+4}|^2\right), \label{J_n}\\
  f_n (z_1,  \ldots, z_{2n+4}) & = \bar{z}_{2n+3}\bar{z}_{2n+4}\prod_{j=1}^{2n}z_j, \notag \\
  \tilde{H}_n (z_1,  \ldots, z_{2n+4}) & = \text{Re}\,f_n,\label{H_n}
\end{align}
respectively. Since the restrictions of $\tilde{J}_n$ and $\tilde{H}_n$ to $v^{-1}(0)$ are invariant under the $K$-action, they descend to the functions $J_n$ and $H_n$ on $M_n$. Moreover, since $\tilde{H}_n$ is invariant under the Hamiltonian $S^1$-action on $\mathbb{C}^{2n+4}$ generated by $\tilde{J}_n$, the same property holds for $H_n$ on $M_n$. Consequently, $J_n$ and $H_n$ are Poisson commutative.
Now, let $\tilde{F}_n:\mathbb{C}^{2n+4}\longrightarrow\mathbb{R}^2$ and $F_n:M_n\longrightarrow \mathbb{R}^2$ be the maps defined by $\tilde{F}_n =(\tilde{J}_n, \tilde{H}_n)$ and $F_n = (J_n, H_n)$, respectively.
\begin{theorem}\label{mainthm}
 For any natural number $n$, The tuple $(M_n, \omega_n, F_n)$ is a semitoric system with $2n$ focus-focus singularities.
\end{theorem}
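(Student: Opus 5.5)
We will check the conditions of Definition~\ref{dfn:semitoric} in turn, postponing the singularity analysis to a separate statement (Proposition~\ref{prop:singtype}, proved in Section~\ref{chapter5}).

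\textbf{Step 1: properness and the $S^1$-action.} Since $M_n$ is compact (it is the reduced space of a Delzant polytope), both $F_n$ and $J_n$ are automatically proper.

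By (\ref{J_n}) and the formula for $\mu$, we have $J_n=\langle \widehat{\mu}, (1,1)\rangle$ up to an additive constant. Hence $J_n$ generates the circle subgroup of the toric $T^2$ with direction $(1,1)$. Because $(1,1)$ is primitive and the $T^2$-action is effective, this $S^1$-action is effective.

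\textbf{Step 2: Poisson commutativity and density of regular points.} Commutativity of $J_n$ and $H_n$ was noted above. $F_n$ is regular on a dense set because $H_n$ is real-analytic and not a function of $J_n$ alone, so the singular set is a proper analytic subset.

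\textbf{Step 3: classification of singularities (the core).} We will state and prove a Proposition~\ref{prop:singtype} asserting the following:
\begin{itemize}
\item every singularity of $F_n$ is of type ER, EE or FF;
\item there are exactly $2n$ FF points.
\end{itemize}

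To locate the singular points, we work upstairs on $v^{-1}(0)$. A point $[z]$ is singular for $F_n$ iff $d\tilde J_n$, $d\tilde H_n$ and the differentials of the $2n+2$ components of $v$ are linearly dependent at $z$. This is a Lagrange-multiplier problem.

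We split into cases according to which coordinates of $z$ vanish; by the defining equations of $v^{-1}(0)$, at most two can vanish.
\begin{itemize}
\item If two coordinates among $z_1,\dots,z_{2n},z_{2n+3},z_{2n+4}$ vanish, then $f_n$ vanishes to second order. These points are the rank-$0$ candidates.
\item The pairs $\{z_j,z_{n+j}\}$-type combinations lying over interior points on the line where $J_n$ separates them should give the FF points. We expect two per index $j$, matching the vertices of the squares $\sigma_j$ glued along $y=-x$, for a total of $2n$.
\item Corners of $\Delta_n$ where $f_n$ does not vanish to second order give EE points.
\item If exactly one factor of $f_n$ vanishes, or none vanish and $df_n$ is proportional to the other differentials, we get rank-$1$ points. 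We will show these are ER.
\end{itemize}

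\textbf{Step 4: verifying the types.} At each candidate we compute the Hessians in local coordinates on a slice transverse to the $K$-orbit. A natural choice is to fix the phases of the nonvanishing coordinates using the $K$-action and use the vanishing coordinates as local complex coordinates. Near an FF candidate with $z_a=z_b=0$, $f_n$ becomes $c\,z_a\bar z_b$ (or $c\,z_az_b$) up to higher order, with $c\neq0$, while $J_n$ becomes $\tfrac12(|z_a|^2-|z_b|^2)$ up to sign. For $H=\operatorname{Re}(z_a\bar z_b)$ and $J=\tfrac12(|z_a|^2-|z_b|^2)$, the characteristic equation (\ref{chreq}) has roots $\pm\alpha\pm i\beta$, and the Hessians are independent, so the point is FF. EE and ER points are handled similarly: the relevant quadratic parts are $|z_a|^2,|z_b|^2$ (or a single $|z_a|^2$), which give purely imaginary roots.

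\textbf{Main obstacle.} The hardest part is the exhaustive case analysis of the Lagrange-multiplier system. In particular, we must rule out degenerate or hyperbolic rank-$1$ singularities where no factor of $f_n$ vanishes. For this we plan to write $f_n=\prod$ in polar form and use that $|f_n|$, restricted to a $J_n$-level set, is a product of functions of the $|z_k|^2$ that are constrained linearly by $v=0$. We will then show its critical points on the reduced space are nondegenerate elliptic extrema, using the concavity of the logarithm of the product on the reduced polygon slice.
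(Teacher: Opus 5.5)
Steps 1 and 2 are fine. In fact $\tilde J_n=\mu_1+\mu_2$ exactly, so $J_n=\langle\widehat\mu_n,(1,1)\rangle$ generates an effective circle. Your analyticity argument for density is a valid alternative to the paper's route, which derives density from nondegeneracy via Bolsinov--Fomenko. Your first bullet in Step 3 is also correct: the vertices where two factors of $f_n$ vanish number exactly $2n$.

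\textbf{Step 3 places the remaining singularities incorrectly.} This case analysis is the real content of the theorem.
\begin{itemize}
\item \emph{The vertices you call EE are ER.} These are $[p^{n,2n+1}]$, $[p^{2n,2n+2}]$, $[p^{2n+1,2n+3}]$, $[p^{2n+2,2n+4}]$, where exactly one factor of $f_n$ vanishes. At them $dJ_n=0$, as at every vertex, but $f_n$ has a nonzero linear term in the vanishing factor, so $dH_n\neq0$ and they are rank-one ER points. The $J_n$-circle has weight $0$ along the edges $x+y=\pm n$, so your check via ``quadratic parts $|z_a|^2,|z_b|^2$'' would simply fail there.
\item \emph{The genuine EE points are missed.} Since $N_s^\pm=\pm(1,1)$, the $J_n$-circle fixes pointwise the two spheres over the edges $x+y=\pm n$ (the closures of $M^{2n+1}$ and $M^{2n+2}$), so $dJ_n\equiv0$ on them. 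The critical points of $H_n$ restricted to these spheres, where $\text{Im}\,f_n=0$ and $\frac{1}{|z_{2n+3}|^2}+\frac{1}{|z_{2n+4}|^2}-\sum_{j=1}^{2n}\frac{1}{|z_j|^2}=0$, are rank-zero EE points. No factor of $f_n$ vanishes there, so your last bullet would wrongly call them rank one. Your reduction by the $J_n$-circle cannot reach these spheres either: the circle is not free there. At their rank-one points $L$ is spanned by $X_H$, and the ER type comes from the definite normal Hessian of $J_n$.
\item \emph{The open edges $M^k$ are regular.} When $z_k$ is a factor of $f_n$, these points have rank $2$, not rank one.
\item \emph{The FF points are misdescribed.} They are not interior points, and they are not of the form $z_j=z_{n+j}=0$, which is empty for $j\ge2$. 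They are exactly the vertices $[p^{1,n+1}]$, $[p^{j,j+1}]$, $[p^{n+j,n+j+1}]$ $(1\le j\le n-1)$ and $[p^{2n+3,2n+4}]$, where both vanishing coordinates divide $f_n$ and the $J_n$-weights are $\pm1$.
\end{itemize}

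\textbf{The FF model in Step 4 is wrong as stated.} $H=\text{Re}(z_a\bar z_b)$ is not invariant under the flow of $J=\frac12(|z_a|^2-|z_b|^2)$, since $z_a\bar z_b$ has weight $2$. For this pair a direct computation gives $\bigl((aA_J+bA_H)\Omega^{-1}\bigr)^2=-(a^2+b^2)I$, so all roots are purely imaginary, not focus-focus. $J_n$-invariance forces $f_n\approx c\,z_az_b$ or its conjugate, as with $c\,\bar z_{2n+3}\bar z_{2n+4}$ at $[p^{2n+3,2n+4}]$. Only that model yields the roots $\pm c\pm i$.

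\textbf{What survives is your treatment of interior points.} $|f_n|^2$ is a product of the affine functions $|z_k|^2=2(c_k-\langle\mu,N_k\rangle)$, and $f_n$ has a primitive $T^2$-weight orthogonal to $(1,1)$. So on each reduced sphere $H_n=|f_n|\cos\phi$ in suitable angle coordinates. Strict concavity of $\log|f_n|$ on the open segment then gives exactly one maximum and one minimum, both nondegenerate, hence ER. This is cleaner than the paper's explicit Lagrange-multiplier Hessians at $[p]$.

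Even so, you must redo the boundary analysis vertex by vertex and edge by edge, treating the two fixed spheres separately, before the theorem follows.
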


To prove Theorem \ref{mainthm}, we determine the set of singularities of $F_n$ and examine the type of each singularity. 
Let $S_n$ be the set of pairs of natural numbers and $M^{k,l}$, $M^k$, and $M^{int}$ be the subsets of $M_n$ as follows:
\begin{align*}
  S_n = &\left\{\begin{pmatrix}1\\n+1\end{pmatrix}\text{\raisebox{-7pt}{,}} \begin{pmatrix}n\\2n+1\end{pmatrix}\text{\raisebox{-7pt}{,}} \begin{pmatrix}2n\\2n+2\end{pmatrix}\text{\raisebox{-7pt}{,}} \begin{pmatrix}2n+1\\2n+3\end{pmatrix}\text{\raisebox{-7pt}{,}} \begin{pmatrix}2n+2\\2n+4\end{pmatrix}\text{\raisebox{-7pt}{,}} \begin{pmatrix}2n+3\\2n+4\end{pmatrix}\right\}\\
        &\cup \bigcup_{j=1}^n\left\{\begin{pmatrix}j\\j+1\end{pmatrix}\text{\raisebox{-7pt}{,}} \begin{pmatrix}n+j\\n+j+1\end{pmatrix}\right\}\text{\raisebox{-7pt}{,}} 
\end{align*}
\begin{align*}
  M^{k,l} & = \left\{ [z_1,\ldots, z_{2n+4}]\in M_n \hspace{1mm}| \hspace{1mm} z_k =z_l =0, z_m\neq0 \hspace{2mm}(m\neq k,l)
  \right\}\hspace{3mm}\bigl(\text{$\bigl(\begin{smallmatrix}k\\l\end{smallmatrix}\bigr)$}\in S_n\bigr), \\
    M^k & =  \left\{ [z_1,\ldots, z_{2n+4}]\in M_n \hspace{1mm}| \hspace{1mm} z_k = 0, z_l \neq 0 \hspace{2mm} (k\neq l) \right\} \hspace{2mm} (1 \leq k \leq 2n+4), \\
     M^{int} & = \left\{ [z_1,\ldots, z_{2n+4}]\in M_n \hspace{1mm}| \hspace{1mm} z_k \neq 0 \hspace{2mm} (\text{for any} \,\,\,k=1,\ldots,2n+4\} \right\}\text{\raisebox{-2pt}{.}}
\end{align*}
Note that $M_n$ is the union of the subsets $M^{k,l}$, $M^k$, and $M^{\text{int}}$. Moreover, $M^{k,l}$ is a singleton. Therefore, to determine the singular set of $F_n$, it suffices to identify the singularities within each of the subsets of $M_n$ defined above.

In the remainder of this paper, we let $[p^{k,l}]\in M^{k,l}, [p^k]\in M^k,$ and $[p]\in M^{\text{int}}$. 

\begin{proposition}\label{prop:singtype}
The singularities of the map $F_n$ and their types are summarized in Table \ref{table:sing}.
  \begin{table}[h] 
  \centering     
  \caption{Singularities of $F_n$ and their types} 
  \label{table:sing}
  \renewcommand{\arraystretch}{1.1}
  \begin{tabular}{c|ccc}
    \hline
    Points & Rank of $dF_n$ & Types & Remarks\\ \hline 
 $[p^{1, n+1}]$& 0 & FF & - \\
 $[p^{j, j+1}]$& 0 & FF & $(1\leq j\leq n-1)$ \\
 $[p^{n+j, n+j+1}]$& 0 & FF & $(1\leq j\leq n-1)$\\
 $[p^{2n+3, 2n+4}]$& 0 & FF & - \\
 $[p^{n, 2n+1}]$& 1 & ER & $dJ_n=0$\\
 $[p^{2n, 2n+2}]$& 1 & ER & $dJ_n=0$\\
 $[p^{2n+1, 2n+3}]$& 1 & ER & $dJ_n=0$\\
 $[p^{2n+2, 2n+4}]$& 1 & ER & $dJ_n=0$ \\
 $[p^k]$& 2 & - & $(1\leq k\leq 2n)$ \\
 $[p^{2n+1}]$&$\leq 1$& EE or ER & $dJ_n=0$\\
 $[p^{2n+2}]$&$\leq 1$& EE or ER & $dJ_n=0$\\
 $[p^{2n+3}]$& 2 & - & - \\
 $[p^{2n+4}]$& 2 & - & - \\
 $[p]$&$\geq 1$& ER & - \\ \hline 
  \end{tabular}
  \renewcommand{\arraystretch}{1.0}
\end{table}
Moreover, for both $[p^k]$
 $(k=2n+1, 2n+2)$ and $[p]$, the rank conditions $\text{rank}\,(dF_n)_{[p^k]}=0$ and $\text{rank}\,(dF_n)_{[p]}=1$ are characterized by the same set of equations:
 \begin{equation*}
    \text{Im}\,f_n = 0,\, \text{and} \hspace{2mm} \dfrac{1}{|z_{2n+3}|^2}+\dfrac{1}{|z_{2n+4}|^2}-\sum_{j=1}^{2n}\dfrac{1}{|z_j|^2} = 0.
  \end{equation*}
\end{proposition}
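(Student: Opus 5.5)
Our plan is to work upstairs on $v^{-1}(0)\subset\mathbb{C}^{2n+4}$ and use the method of Lagrange multipliers. A point $[z]\in M_n$ is a singularity of $F_n$ of rank $\le 1$ exactly when some nontrivial combination $a\,d\tilde J_n+b\,d\tilde H_n$ restricted to $T_zv^{-1}(0)$ vanishes. Equivalently, there are multipliers $\lambda\in\mathfrak{k}$ such that
\[
a\,d\tilde J_n+b\,d\tilde H_n=\sum\lambda\, d\langle v,w\rangle
\]
at $z$, where $w$ runs over the basis $w_j^\pm,w_{2n+1},w_{2n+2}$. Since $d\langle v,w\rangle$ generates the $K$-action, this is the condition that $X_{a\tilde J_n+b\tilde H_n}(z)$ is tangent to the $K$-orbit. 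Rank $0$ means that both $d\tilde J_n$ and $d\tilde H_n$ separately lie in the span of the $d\langle v,w\rangle$.

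We would first record which coordinates can vanish simultaneously on $v^{-1}(0)$. From the defining equations, at most two coordinates vanish, and exactly the pairs in $S_n$ can vanish together. For a given vanishing pattern we use the $K$-action to normalize: we rotate phases so that all nonzero coordinates except two become real positive, and then take local coordinates on $M_n$ given by the two complex coordinates that vanish, or one vanishing coordinate and one phase–modulus pair.
\begin{itemize}
\item \emph{At $M^{k,l}$:} we write $F_n$ to second order in the two vanishing coordinates.
  \begin{itemize}
  \item When neither of $z_k,z_l$ is $z_{2n+3}$ or $z_{2n+4}$ (the pairs $(j,j+1)$, $(n+j,n+j+1)$, $(1,n+1)$), $f_n$ is locally $C\,z_kz_l$. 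Meanwhile $\tilde J_n$, expressed through the constraints, becomes $\pm\frac12(|z_k|^2-|z_l|^2)$ plus a constant. After checking signs via the weights of the $K$-action, these form the standard focus-focus pair $(\frac12(|u|^2-|w|^2),\mathrm{Re}(uw))$. The pair $(2n+3,2n+4)$ behaves the same way with $\bar z_{2n+3}\bar z_{2n+4}$.
  \item For the pairs $(n,2n+1)$, $(2n,2n+2)$, $(2n+1,2n+3)$, $(2n+2,2n+4)$, one of the vanishing coordinates does not appear in $f_n$, so $f_n$ is linear in the other and $d\tilde H_n\ne0$. Here $\tilde J_n$ is of the form $\frac12(|u|^2+|w|^2)$, giving $dJ_n=0$ and, in the quadratic part, elliptic behaviour; this yields ER.
  \end{itemize}
  In each case we then compute the characteristic polynomial \eqref{chreq} and check the blocks from \eqref{chtype1} and \eqref{chtype2}.
\item \emph{At $M^k$:} for $k\le 2n$, and for $k=2n+3,2n+4$, $f_n$ vanishes to first order in $z_k$ and $dJ_n$ is independent of it, so the rank is $2$.
\item \emph{At $k=2n+1,2n+2$ and in $M^{int}$:} we compute $d\tilde J_n$ and $d\tilde H_n$ in polar form $z_m=r_me^{i\theta_m}$. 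Since $f_n$ depends on the phases only through $\Theta=\sum_{j\le 2n}\theta_j-\theta_{2n+3}-\theta_{2n+4}$, the multiplier equations split into angular and radial parts.
  \begin{itemize}
  \item The angular part forces $\sin\Theta=0$, i.e. $\mathrm{Im}\,f_n=0$.
  \item The radial part is a linear system in the $r_m\,dr_m$, solved using the constraint equations of $v^{-1}(0)$. Eliminating the multipliers should produce exactly the condition $\frac{1}{|z_{2n+3}|^2}+\frac{1}{|z_{2n+4}|^2}-\sum_{j}\frac{1}{|z_j|^2}=0$.
  \end{itemize}
  Because $z_{2n+1},z_{2n+2}$ do not enter $f_n$ and $\tilde J_n$, the same equations appear at $[p^{2n+1}],[p^{2n+2}]$, where in addition $dJ_n=0$ automatically.
\end{itemize}

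For types at these rank-deficient points, we would reduce by the $S^1$-action of $J_n$ (or use the nondegeneracy criterion on $L_p^\omega/L_p$). We would show that the transverse Hessian of $H_n$ restricted to $J_n$-level sets is definite at critical points, giving ER in $M^{int}$, and EE or ER at $[p^{2n+1}],[p^{2n+2}]$.

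The main obstacle is this last step: establishing the definiteness, and hence ruling out hyperbolic blocks, for the interior critical points. The difficulty is that they form a family rather than isolated points. We would first try to compute the second variation of $\log|f_n|$ along $\mathrm{Im} f_n=0$, using the convexity of $\sum\log r_j^2$ on the constraint set, to get a sign.
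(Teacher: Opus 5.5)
Your route differs from the paper's. The paper computes Lagrange-multiplier Hessians in ambient coordinates with explicit bases; you use Delzant slice charts at the vertices and action--angle coordinates with the phase of $f_n$ elsewhere. This route is viable. Your rank analysis is right, and so is your focus-focus normal form $J_n=\mathrm{const}\pm\frac12(|z_k|^2-|z_l|^2)$, $H_n=C\,\mathrm{Re}(z_kz_l)$ with $C\neq0$. At $M^k$, $k\notin\{2n+1,2n+2\}$, you should still say why $dJ_n\neq0$: $\Pi(e_k)\neq\pm(1,1)$, so the $J_n$-circle is not the isotropy circle.

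At $[p^{n,2n+1}],[p^{2n,2n+2}],[p^{2n+1,2n+3}],[p^{2n+2,2n+4}]$, however, your claim $J_n=\frac12(|u|^2+|w|^2)$ is false. It is even incompatible with $dH_n\neq0$, since an $S^1$-invariant function has zero differential at a fixed point all of whose weights are nonzero. On $v^{-1}(0)$ one has $\tilde J_n=n-\frac12|z_{2n+1}|^2=-n+\frac12|z_{2n+2}|^2$; these vertices lie on the edges with normal $\pm(1,1)$, whose preimages are fixed by the $J_n$-circle. So $J_n=\mathrm{const}\mp\frac12|w|^2$ with $w\in\{z_{2n+1},z_{2n+2}\}$ and no $u$-dependence. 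ER then follows because $L=\langle X_{H_n}\rangle$ lies in the $u$-plane, $L^\omega/L$ is identified with the $w$-plane, and $\widehat{d^2J_n}$ is $\mp$ the standard form there.

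The more serious gap is that the types at $[p]$ and at $[p^{2n+1}],[p^{2n+2}]$ are not established. You leave the definiteness as an open obstacle, and reduction by $J_n$ is unavailable on $M^{2n+1},M^{2n+2}$, which are fixed by the $J_n$-circle. At the rank-one points there, the relevant Hessian is that of $J_n$, not $H_n$.

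The missing argument is short. Let $I$ be the second component of $\widehat\mu$ and $\Theta=\arg f_n$. On each reduced space $J_n=\mathrm{const}$ these are Darboux coordinates up to sign, and $H_n=g\cos\Theta$ with $g=\prod_m|z_m|$, $m\in\{1,\dots,2n,2n+3,2n+4\}$. Since $\frac12|z_m|^2+\langle\widehat\mu,\Pi(e_m)\rangle$ is constant and $\widehat\mu=(J_n-I,I)$, each $|z_m|^2$ is affine in $I$, with slope $-2$ for $m\le 2n$ and $+2$ for $m=2n+3,2n+4$.

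Hence $(\log g)'$ is exactly the expression in the proposition, and $(\log g)''=-2\sum_m|z_m|^{-4}<0$. What you need is strict \emph{concavity} of $\sum_m\log|z_m|^2$, not convexity; the opposite sign would produce a hyperbolic block. At a critical point ($\sin\Theta=0$, $g'=0$) the reduced Hessian is $-H_n\,\mathrm{diag}\bigl(2\sum_m|z_m|^{-4},1\bigr)$, which is definite. This gives $P(t)=t^2+2H_n^2\sum_m|z_m|^{-4}$, the paper's formula.

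On $M^{2n+1}$ (and similarly $M^{2n+2}$), split $T_pM_n=N\oplus T_pM^{2n+1}$, with $N$ the plane of $w=z_{2n+1}=w_1+iw_2$. By $S^1$-invariance, $d^2J_n=-(dw_1^2+dw_2^2)\oplus 0$ and $d^2H_n=\kappa(dw_1^2+dw_2^2)\oplus Q$, where $Q$ is the Hessian of $H_n|_{M^{2n+1}}=g\cos\Theta$. The same concavity makes $Q$ definite at its critical points.
\begin{itemize}
\item At rank one, $L=\langle X_{H_n}\rangle\subset T_pM^{2n+1}$ and $L^\omega/L$ is identified with $N$, so $d^2J_n$ gives ER.
\item At rank zero, a generic $a_1d^2J_n+a_2d^2H_n$ has two definite blocks, giving EE.
\end{itemize}
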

Assuming Proposition \ref{prop:singtype}, we prove Theorem \ref{mainthm}.
\begin{proof}
[Proof of Thm \ref{mainthm}]
It remains to show that $F_n$ is regular on a dense subset of $M_n$. This follows from the fact that the singular set of $F_n$ is two-dimensional, which is a consequence of the non-degeneracy of all its singularities (see \cite[Proposition~3.16]{bolsinov-fomenko}).
\end{proof}

\section{Proof of Proposition \ref{prop:singtype}}\label{chapter5}

In the previous section, we assumed Proposition \ref{prop:singtype} to prove Theorem \ref{mainthm}. In this final section, we provide the proof of Proposition \ref{prop:singtype} and conclude the paper.
Henceforth, to simplify the notation, we denote the functions $\tilde{F}_n, \tilde{J}_n, f_n, \tilde{H}_n, F_n, J_n$, and $H_n$ by $\tilde{F}, \tilde{J}, f, \tilde{H}, F, J$, and $H$, respectively. In addition, Identifying $\mathbb{C}^{2n+4}$ and $\mathbb{R}^{4n+8}$ via the relation $z_j=x_j+iy_j$, we take as a basis of the tangent space at each point of $\mathbb{C}^{2n+4}$ as follows:
\begin{align*}
  \left( \pdv{}{x_1}\text{\raisebox{-10pt}{,}}\,\pdv{}{y_1}\text{\raisebox{-10pt}{,}}\,\text{\raisebox{-10pt}{\ldots}}\text{\raisebox{-10pt}{,}}\,\pdv{}{x_{2n+4}}\text{\raisebox{-10pt}{,}}\,\pdv{}{y_{2n+4}}\right)\raisebox{-10pt}{.}
\end{align*}
\vspace{3mm}
\subsection{Rank part}
First, we calculate the rank of the derivative of the map $F$. 
For a point $z\in v^{-1}(0)$, we define a subspace $(X_K)_z$ of $T_z(v^{-1}(0))$ as follows:
\begin{equation}
    (X_K)_z = \left\{\dfrac{d}{dt}\Big|_{t=0} \exp (tu)\!\cdot\! z\,\Bigg|\,u \in \mathfrak{k}\right\}\text{\raisebox{-10pt}{.}}
  \end{equation}
\begin{lemma}\label{lem:rank}
  For each $z\in v^{-1}(0)$, we have $\textnormal{rank}\,dF_{[z]}=\textnormal{rank}\,(d\tilde{F}|_{v^{-1}(0)})_{z}$.
\end{lemma}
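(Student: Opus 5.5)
The lemma reduces to a standard fact about symplectic reduction. The quotient map $q:v^{-1}(0)\to M_n$ is a surjective submersion with fibers the $K$-orbits, so $\ker dq_z=(X_K)_z$. By construction $F\circ q=\tilde F|_{v^{-1}(0)}$, since $J_n$ and $H_n$ are defined as the descents of $\tilde J|_{v^{-1}(0)}$ and $\tilde H|_{v^{-1}(0)}$. Differentiating at $z\in v^{-1}(0)$ gives
\[
dF_{[z]}\circ dq_z=(d\tilde F|_{v^{-1}(0)})_z \colon T_z(v^{-1}(0))\longrightarrow\mathbb{R}^2 .
\]

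The plan has two steps. First, recall from the Delzant construction that $K$ acts freely on $v^{-1}(0)$ and that $0$ is a regular value of $v$. Then $v^{-1}(0)$ is a submanifold and the quotient $v^{-1}(0)\to v^{-1}(0)/K$ is a principal $K$-bundle, so $dq_z:T_z(v^{-1}(0))\to T_{[z]}M_n$ is surjective. Second, the rank of a composition $A\circ B$ with $B$ surjective equals the rank of $A$, because $\operatorname{im}(A\circ B)=\operatorname{im}A$. Applying this with $A=dF_{[z]}$ and $B=dq_z$ gives $\operatorname{rank} dF_{[z]}=\operatorname{rank}(d\tilde F|_{v^{-1}(0)})_z$.

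As a consistency check, $(X_K)_z\subset\ker (d\tilde F|_{v^{-1}(0)})_z$ because $\tilde F$ is $K$-invariant on $v^{-1}(0)$. This matches the factorization through $T_z(v^{-1}(0))/(X_K)_z\cong T_{[z]}M_n$, and it is presumably why $(X_K)_z$ is introduced. For the later computations it is useful to state the equivalent form: the rank equals the rank of $d\tilde F_z$ restricted to $T_z(v^{-1}(0))$, or to any complement of $(X_K)_z$ in it.

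The only point needing care, and the main potential obstacle, is justifying that $q$ is a submersion. That rests on freeness of the $K$-action on $v^{-1}(0)$ and regularity of $0$ for $v$, both standard outputs of the Delzant construction recalled in Section~\ref{chapter2}. They can also be checked directly here, since at every point of $v^{-1}(0)$ at most two coordinates vanish, and those two correspond to adjacent facets.
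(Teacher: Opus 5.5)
Your proof is correct and is essentially the paper's argument. The paper also uses $K$-invariance of $\tilde{F}$ together with $\ker\,(dq)_z=(X_K)_z$, so that $(d\tilde{F}|_{v^{-1}(0)})_z$ factors through $T_z(v^{-1}(0))/(X_K)_z\cong T_{[z]}M_n$. You instead state the factorization $dF_{[z]}\circ dq_z=(d\tilde{F}|_{v^{-1}(0)})_z$ with $dq_z$ surjective, and you explicitly justify that surjectivity (freeness of the $K$-action and regularity of $0$), which the paper leaves implicit.
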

\begin{proof}
  This follows from the fact that the map $\tilde{F}$ is invariant under the action of $K$ and that $\ker\,(dq)_z = (X_K)_z$ (Recall that $q$ is the quotient map in the diagram (\ref{diagram:Delzant})).
\end{proof}
Thus, to determine the rank of the map $F$ at $[z]\in M_n$, it suffices to calculate the rank of $\tilde{F}|_{v^{-1}(0)}$ at $z\in v^{-1}(0)$. To this end we describe $d\tilde{F}$ and the tangent space of $v^{-1}(0)$.

For any $j = 1,\ldots, 2n, 2n+3, 2n+4$, let $R_j, I_j:\mathbb{C}^{2n+4}\longrightarrow \mathbb{R}$ be the functions defined by 
\begin{align*}
   R_j = \left\{
    \begin{matrix}
    \text{Re}\left(\dfrac{f}{z_j}\right) & (j=1, \ldots, 2n), \\[3ex]
    \text{Re}\left(\dfrac{f}{\bar{z}_j}\right) & (j=2n+3, 2n+4),
  \end{matrix}\right. 
  & & \hspace{5mm}
  I_j = \left\{
    \begin{matrix}
    \text{Im}\left(\dfrac{f}{z_j}\right) & (j=1, \ldots, 2n), \\[3ex]
    \text{Im}\left(\dfrac{f}{\bar{z}_j}\right) & (j=2n+3, 2n+4).
    \end{matrix}\right.
& \hspace{5mm}
\end{align*}
\begin{lemma}
The Jacobian matrices of the functions $\tilde{J}$ and $\tilde{H}$ are given by
  \begin{align*}
    d\tilde{J}
    & = \bigl(0,\ldots,0,-x_{2n+3}, -y_{2n+3}, x_{2n+4}, y_{2n+4}\bigr), \\[3ex] 
    d\tilde{H}
    & = \bigl(R_1, -I_1, \ldots, R_{2n}, -I_{2n}, 0, 0, 0, 0, R_{2n+3}, I_{2n+3}, R_{2n+4}, I_{2n+4}\bigr)\text{\raisebox{-2pt}{.}}
  \end{align*}
\end{lemma}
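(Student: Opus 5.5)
The plan is a direct coordinate computation with Wirtinger derivatives. Throughout, I read $f/z_j$ and $f/\bar z_j$ as the polynomials obtained by deleting the corresponding factor from the monomial $f=\bar z_{2n+3}\bar z_{2n+4}\prod_{j=1}^{2n}z_j$. With this reading, $R_j$ and $I_j$ are defined everywhere on $\mathbb{C}^{2n+4}$, including where some $z_j=0$.

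\textbf{The function $\tilde J$.} By (\ref{J_n}) it depends only on $z_{2n+3}$ and $z_{2n+4}$, and
\[
\tilde J=-\tfrac12\bigl(x_{2n+3}^2+y_{2n+3}^2\bigr)+\tfrac12\bigl(x_{2n+4}^2+y_{2n+4}^2\bigr).
\]
Differentiating in each real coordinate gives the row $(0,\ldots,0,-x_{2n+3},-y_{2n+3},x_{2n+4},y_{2n+4})$ immediately.

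\textbf{The function $\tilde H$.} Since $\tilde H=\text{Re}\,f$ and $\text{Re}$ is real-linear, we have $\partial\tilde H/\partial x_j=\text{Re}(\partial f/\partial x_j)$ and $\partial\tilde H/\partial y_j=\text{Re}(\partial f/\partial y_j)$. Using $\partial/\partial x_j=\partial/\partial z_j+\partial/\partial\bar z_j$ and $\partial/\partial y_j=i(\partial/\partial z_j-\partial/\partial\bar z_j)$, I split into three cases according to how $f$ depends on $z_j$.

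\emph{Case $1\le j\le 2n$.} Here $f$ is holomorphic and linear in $z_j$, so $\partial f/\partial z_j=f/z_j$ and $\partial f/\partial\bar z_j=0$. Hence $\partial f/\partial x_j=f/z_j$ and $\partial f/\partial y_j=i\,f/z_j$. Taking real parts, and using $\text{Re}(iw)=-\text{Im}\,w$, the two entries are $R_j$ and $-I_j$.

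\emph{Case $j=2n+3,2n+4$.} Here $f$ is antiholomorphic and linear in $\bar z_j$, so $\partial f/\partial x_j=f/\bar z_j$ and $\partial f/\partial y_j=-i\,f/\bar z_j$. Using $\text{Re}(-iw)=\text{Im}\,w$, the entries are $R_j$ and $I_j$.

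\emph{Case $j=2n+1,2n+2$.} The variable $z_j$ does not appear in $f$, which gives the four zeros.

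Assembling the entries in the order of the fixed basis $(\partial/\partial x_1,\partial/\partial y_1,\ldots)$ yields exactly the stated row for $d\tilde H$.

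There is no real obstacle here. The only points needing care are the sign conventions in the $y$-derivatives (holomorphic versus antiholomorphic variables give $-I_j$ versus $+I_j$), and the remark that $f/z_j$ denotes a polynomial, so that the formula remains valid on the whole of $\mathbb{C}^{2n+4}$.
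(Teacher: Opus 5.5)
Your computation is correct. It is the same routine coordinate differentiation that the paper leaves implicit (the lemma is stated without proof), and your sign bookkeeping via $\text{Re}(iw)=-\text{Im}\,w$ for the holomorphic variables and $\text{Re}(-iw)=\text{Im}\,w$ for the antiholomorphic ones is right. Reading $f/z_j$ and $f/\bar z_j$ as the monomials with one factor deleted is also the right convention, since the paper later evaluates $R_l$ and $I_l$ at points such as $p^{k,l}$ and $p^k$ where $z_l$ or $z_k$ vanishes.
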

\begin{lemma}
  At a point $z=p^{k,l}$, one can take a basis $\mathfrak{I}_{k,l}$ for $T_z(v^{-1}(0))$ as \begin{equation*}\mathfrak{I}_{k, l} = \{ \alpha^k, \beta^k, \alpha^l, \beta^l, w^1, w^2, \ldots, \widehat{w^k}, \ldots, \widehat{w^l}, \ldots, w^{2n+4} \} \subset T_z{\mathbb{C}^{2n+4}}.
\end{equation*}
Here, the notation $\widehat{*}$ indicates that the term $*$ is omitted, and letting $w\in \mathbb{R}^{4n+8}$ be $w = (w_m)_{1\leq m\leq 4n+8}$, we have
\begin{equation*}
  (\alpha^k)_m =\delta_{2k-1, m}, \hspace{3mm} (\alpha^l)_m =\delta_{2l-1, m}, \hspace{3mm} (\beta^k)_m =\delta_{2k, m}, \hspace{3mm} (\beta^l)_m =\delta_{2l, m}, \\
\end{equation*}
\begin{equation*}
  (w^j)_m = 
  \begin{cases}
    -y_j & (m=2j-1), \\
    x_j & (m=2j), \\
    0 & (otherwise).
  \end{cases}
\end{equation*}
In the case $z=p^k$, one can take a basis $\mathfrak{I}_k$ for $T_z(v^{-1}(0))$ as 
\begin{equation*}
\mathfrak{I}_k = \{ \epsilon^k, \alpha^k, \beta^k, w^1, w^2, \ldots, \widehat{w^k}, \ldots, w^{2n+4} \} \subset T_z{\mathbb{C}^{2n+4}},
\end{equation*}
where $\epsilon^k$ is an arbitrary vector satisfying the following:
\begin{itemize}
    \item For $k=1, \ldots, n$, we have
    \begin{gather*}
\bigl(dv_0(\epsilon^k)\bigr)_m=
      \begin{cases}
        m-k & (1\leq m\leq n),\\
        1-k-m+n & (n+1\leq m \leq2n),\\
        1 & (m=2n+1),\\
        -1 & (m=2n+2),\\
        k & (m=2n+3),\\
        k-1 & (m=2n+4).
      \end{cases}
    \end{gather*}
    
  \item For $k=n+1, \ldots, 2n$, we have
    \begin{gather*}
\bigl(dv_0(\epsilon^k)\bigr)_m=
     \begin{cases}
       m+k-1 & (1\leq m\leq n),\\
       k-m+n & (n+1\leq m \leq2n),\\
       1 & (m=2n+1),\\
       -1 & (m=2n+2),\\
       1-k & (m=2n+3),\\
       -k & (m=2n+4).
     \end{cases}
   \end{gather*}
   
\item For $k=2n+1, 2n+2$, we have
    \begin{gather*}
     \bigl(dv_0(\epsilon^k)\bigr)_m=
     \begin{cases}
       1 & (1\leq m\leq 2n),\\
       0 & (m=2n+1, 2n+2),\\
       -1 & (m=2n+3, 2n+4).
     \end{cases}
   \end{gather*}

   \item For $k=2n+3$, we have
    \begin{gather*}
     \bigl(dv_0(\epsilon^k)\bigr)_m=
     \begin{cases}
       m & (1\leq m\leq n),\\
       1-m+n & (n+1\leq m \leq2n),\\
       1 & (m=2n+1),\\
       -1 & (m=2n+2),\\
       0 & (m=2n+3),\\
       -1 & (m=2n+4).
     \end{cases}
   \end{gather*}

   \item For $k=2n+4$, we have
    \begin{gather*}
     \bigl(dv_0(\epsilon^k)\bigr)_m=
     \begin{cases}
       m-1 & (1\leq m\leq n),\\
       n-m & (n+1\leq m \leq2n),\\
       1 & (m=2n+1),\\
       -1 & (m=2n+2),\\
       1 & (m=2n+3),\\
       0 & (m=2n+4).
     \end{cases}
   \end{gather*}
\end{itemize}
In the case $z=p$, one can take a basis $\mathfrak{I}$ for $T_z(v^{-1}(0))$ as 
\begin{equation*}
\mathfrak{I} = \{ \epsilon^{2n+3}, \epsilon^{2n+4}, w^1, \ldots, w^{2n+4} \} \subset T_z{\mathbb{C}^{2n+4}}.
\end{equation*}
\end{lemma}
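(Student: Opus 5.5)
The statement to be proved is the last lemma: the listed sets are bases of $T_z(v^{-1}(0))$. The plan rests on three facts.

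\begin{itemize}
\item $v^{-1}(0)$ is a regular level set, since $K$ acts freely on it.
\item Hence $\dim v^{-1}(0)=2(2n+4)-(2n+2)=2n+6$.
\item $T_z(v^{-1}(0))=\ker (dv)_z=\ker(\iota^*\circ (dv_0)_z)$.
\end{itemize}

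Since $\iota^*$ has kernel $\Pi^*((\mathbb{R}^2)^*)$ by the exactness of (\ref{eq:exactseq}), a vector $X\in T_z\mathbb{C}^{2n+4}$ is tangent iff $(dv_0)_z(X)\in \operatorname{im}\Pi^*$. Equivalently, $(dv_0)_z(X)$ is a linear combination of the two rows of the matrix of $\Pi$ displayed above. With this criterion, the proof in each case has two steps: check that each listed vector lies in the kernel, and check that the number of vectors equals $2n+6$ and that they are linearly independent.

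\textbf{Membership.} First I will treat the vectors $w^j$. Each $w^j$ is the infinitesimal rotation $\frac{d}{dt}e^{it}z_j$ in the $j$-th coordinate. Hence $(dv_0)_z(w^j)=0$, so $w^j$ is tangent at every point. When $z_j=0$ we have $w^j=0$, which is why $w^k$ and $w^l$ are omitted.

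Next, at $p^{k,l}$ with $z_k=z_l=0$, the vectors $\alpha^k,\beta^k,\alpha^l,\beta^l$ are coordinate directions in coordinates that vanish. Since $v_0$ is quadratic in $z_k$, $dv_0$ kills them. So they are tangent.

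At $p^k$, the vector $\epsilon^k$ is defined only through the value of $dv_0(\epsilon^k)$. Two things must be checked.
\begin{itemize}
\item The prescribed vector lies in $\operatorname{im}\Pi^*$. Writing the rows of $\Pi$ as $r_1,r_2$, this means exhibiting $a,b$ with the prescribed vector equal to $a r_1+b r_2$. Its $k$-th entry must be $0$, because $z_k=0$ forces $(dv_0)_z(X)_k=0$. For instance, for $k\le n$ one takes $a=-1$, $b=1$, then adds multiples so that the $k$-th entry vanishes. Concretely I expect $(dv_0)(\epsilon^k)=\langle\,\cdot\,\rangle$ to equal $-\Pi^*(\eta)$, where $\eta$ is chosen with $\langle \eta, N_k\rangle=0$. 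The analogous checks for the remaining $k$ are routine.
\item Such an $\epsilon^k$ exists. Since $z_m\neq0$ for $m\ne k$, each coordinate of $dv_0$ can be prescribed freely by moving $|z_m|$ radially, that is, along $(x_m,y_m)$. The $k$-th coordinate must be $0$, and it is.
\end{itemize}

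At interior points $p$ the same reasoning applies to $\epsilon^{2n+3}$ and $\epsilon^{2n+4}$. These two are defined by their $dv_0$-values, which need not have a zero entry.

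\textbf{Independence and count.} For $\mathfrak{I}_{k,l}$ there are $4+(2n+2)=2n+6$ vectors. The $w^j$ ($j\ne k,l$) are supported in disjoint coordinate pairs and are nonzero, and the $\alpha,\beta$ are supported in the $k,l$ slots, so all are independent.

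For $\mathfrak{I}_k$ there are $1+2+(2n+3)=2n+6$ vectors. The vectors $\alpha^k,\beta^k,w^j$ all have $dv_0=0$ and are independent as before. The vector $\epsilon^k$ has $dv_0(\epsilon^k)\neq0$, so it is not in their span.

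For $\mathfrak{I}$ there are $2+(2n+4)=2n+6$ vectors. Here $dv_0(\epsilon^{2n+3})$ and $dv_0(\epsilon^{2n+4})$ should be independent elements of $\operatorname{im}\Pi^*$. They are two independent combinations of $r_1,r_2$, because their $(2n+3,2n+4)$ entries are $(0,-1)$ and $(1,0)$. So no combination of them lies in the span of the $w^j$, which is $\ker dv_0$.

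\textbf{Main obstacle.} The main obstacle is the bookkeeping in the membership step: verifying, for each of the five families of $k$, that the prescribed $dv_0(\epsilon^k)$ equals $a r_1+b r_2$ for suitable $a,b$, including the index shifts. For $k\le n$, I would first try $a=-1,\ b=1$ and correct by a multiple of $(r_1-r_2)$ to zero out the $k$-th entry. The other cases follow by the same method.
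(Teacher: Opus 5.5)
Your overall architecture is the natural one; the paper states this lemma without proof, so there is nothing to compare it against. The pieces that work are these: $v^{-1}(0)$ is a regular level of dimension $2n+6$; a vector $X$ is tangent iff $(dv_0)_z(X)\in\operatorname{im}\Pi^*=\operatorname{span}(r_1,r_2)$; the $w^j$ and the $\alpha,\beta$ lie in $\ker dv_0$; and the counting and independence arguments are correct.

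\textbf{The gap.} It sits in the step you called routine, and that step fails for the statement as printed. For $k=n+1,\ldots,2n$ the prescribed entry at $m=k$ is $k-k+n=n\neq 0$. But $(dv_0)_z(X)_k=0$ for every $X$, because $z_k=0$. So no vector $\epsilon^k$ with that image exists at $p^k$, and your sentence ``The $k$-th coordinate must be $0$, and it is'' is false for this family. Your own criterion exposes the problem. The prescribed vector is $(1-k)r_1+kr_2=\Pi^*(\eta)$ with $\eta=(1-k,k)$, but $\langle\eta,\Pi(e_k)\rangle=\langle\eta,N^-_{k-n}\rangle=n\neq 0$. The intended formula has $j=k-n$ in place of $k$ throughout that case. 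It then equals $(1-j)r_1+jr_2$, which vanishes at position $n+j$. A correct proof has to detect and repair this typo rather than assert that the check succeeds.

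\textbf{The recipe for $k\le n$.} Your concrete suggestion also cannot work. We have $-r_1+r_2+t(r_1-r_2)=(t-1)(r_1-r_2)$, which always has entry $0$ at position $2n+1$, whereas the prescribed value there is $1$. It also has equal entries in positions $1,\ldots,2n$, so it cannot vanish at position $k$ without being zero. The right coefficients are $k\,r_1+(1-k)\,r_2$. The remaining cases are:
\begin{itemize}
\item $r_2-r_1$ for $k=2n+1,2n+2$;
\item $r_2$ for $k=2n+3$;
\item $r_1$ for $k=2n+4$.
\end{itemize}
Each has the form $\Pi^*(\eta)$ with $\langle\eta,\Pi(e_k)\rangle=0$.

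\textbf{What remains.} Once these identifications are written out and the second family is corrected, the rest of your argument goes through. Existence follows by adjusting $|z_m|$ radially for $m\neq k$. Independence follows because $dv_0(\epsilon^k)\neq 0$, while the remaining vectors lie in $\ker dv_0$ and are independent there.
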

In the remainder of this paper, we choose $\epsilon^k$ in the above lemma as follows. Suppose now that $(dv_0(\epsilon^k))_m=r$. Then,
\begin{gather*}
  (\epsilon^k)_{2m-1}=-\dfrac{rx_m}{|z_m|^2},\hspace{5mm}(\epsilon^k)_{2m}=-\dfrac{ry_m}{|z_m|^2}.
\end{gather*}
Having established the basis for all cases, we now compute the rank of $dF$.
\begin{proof}[Proof of Proposition \ref{prop:singtype} (Rank Part)]
  At the point $[z]=[p^{k,l}]$, 
\begin{itemize}
  \item if neither $k$ nor $l$ is $2n+1$ or $2n+2$,
  \begin{equation*} d\tilde{F}(\mathfrak{I}_{k, l}) = 
    \begin{pmatrix}
      d\tilde{J} \\
      d\tilde{H}
    \end{pmatrix}
    (\mathfrak{I}_{k, l}) = 0.
  \end{equation*}
  Therefore, by Lemma \ref{lem:rank}, we have $\textnormal{rank}\,dF_{[z]}=\textnormal{rank}\,(d\tilde{F}|_{v^{-1}(0)})_{z} = 0$.
\vspace{3mm}
  \item If $k=2n+1$ or $2n+2$, (therefore $l=$)
  \begin{equation*}  d\tilde{F}(\mathfrak{I}_{k, l}) =
    {\addtolength{\arraycolsep}{-3pt}
    \begin{pmatrix}
      0 & 0 & 0 & 0 & 0 & \ldots & 0 \\
      0 & 0 & R_l & I_l & 0 & \ldots & 0
    \end{pmatrix}
    }\text{\raisebox{-7pt}{.}}
    \end{equation*}
Therefore, we have $dJ_{[z]}=0$ and also have $\textnormal{rank}\,dF_{[z]} = 1$ since at least one of $R_l$ and $I_l$ is non-zero.
\vspace{3mm}
\item If $l=2n+1$ or $2n+2$,
\begin{equation*}
d\tilde{F}(\mathfrak{I}_{k, l}) =
    {\addtolength{\arraycolsep}{-3pt}
    \begin{pmatrix}
      0 & 0 & 0 & \ldots & 0 \\
      R_k & -I_k & 0 & \ldots & 0
    \end{pmatrix}
    }\text{\raisebox{-7pt}{.}}
    \end{equation*}
As in the previous case, we have $dJ_{[z]}=0$ and $\textnormal{rank}\,dF_{[z]} = 1$.
\end{itemize}
\vspace{3mm}

In the case $[z]=[p^k]$, 
\vspace{3mm}
\begin{itemize}
  \item if $k=1,\ldots,2n$, we have 
   \begin{equation*} d\tilde{F}(\mathfrak{I}_k) =
    {\addtolength{\arraycolsep}{-3pt}
    \begin{pNiceMatrix}
      1&0&0&*&\ldots&*\\
      *&R_k&-I_k&*&\ldots&*
    \end{pNiceMatrix}
    }\text{\raisebox{-7pt}{.}}
    \end{equation*}
Since at least one of $R_k$ and $I_k$ is non-zero, we have $\textnormal{rank}\,dF_{[z]} = 2$.
\vspace{3mm}
\item If $k=2n+1$ or $2n+2$, we have 
\begin{equation*}
d\tilde{F}(\mathfrak{I}_k) =
    {\addtolength{\arraycolsep}{-3pt}
    \begin{pmatrix}
      0 & 0 & 0 & 0 & 0 & \ldots & 0\\
     \left(\dfrac{1}{|z_{2n+3}|^2}+\dfrac{1}{|z_{2n+4}|^2}-\sum\limits_{j=1}^{2n} \dfrac{1}{|z_j|^2}\right)\tilde{H} & 0 & 0 & 0 & \text{Im}f & \ldots & \text{Im}f
    \end{pmatrix}
    }\text{\raisebox{-14pt}{.}}
    \end{equation*}
Therefore, we have $dJ_{[z]}=0$. In addition,  $\text{rank}\,dF_{[z]}=0$ 
holds if and only if 
\begin{equation*}
    \text{Im}\,f = 0 \hspace{2mm}\text{and} \hspace{3mm} \dfrac{1}{|z_{2n+3}|^2}+\dfrac{1}{|z_{2n+4}|^2}-\sum\limits_{j=1}^{2n} \dfrac{1}{|z_j|^2} = 0
\end{equation*}
  holds.
\vspace{3mm}
\item If $k=2n+3, 2n+4$, we have 
\begin{equation*}
d\tilde{F}(\mathfrak{I}_k) =
    {\addtolength{\arraycolsep}{-3pt}
    \begin{pmatrix}
      1&0&0&*&\ldots&*\\
      *&R_k&I_k&*&\ldots&*
    \end{pmatrix}
    }\text{\raisebox{-7pt}{.}}
    \end{equation*}
Since at least one of $R_k$ and $I_k$ is non-zero, we have $\text{rank}\,dF_{[z]}=2$.
\end{itemize}
\vspace{3mm}

In the case $[z]=[p]$, we have 
\begin{equation*}
    d\tilde{F}(\mathfrak{I}) =
    {\addtolength{\arraycolsep}{-3pt}
    \begin{pmatrix}
      1&1&0&0&0&\ldots&0&0&0\\
      \kappa_1&\kappa_2&0&0& 
      -\text{Im}f & \ldots & -\text{Im}f & \text{Im}f & \text{Im}f
    \end{pmatrix}
    }\text{\raisebox{-6pt}{,}}
    \end{equation*}
where 
\begin{align*}
      \kappa_1 &= \left(\dfrac{1}{|z_{2n+4}|^2}-\sum_{j=1}^n\left(\dfrac{j}{|z_j|^2}+\dfrac{1-j}{|z_{n+j}|^2}\right)\right)\tilde{H}, \\
      \kappa_2 &= \left(-\dfrac{1}{|z_{2n+3}|^2}+\sum_{j=1}^n\left(\dfrac{1-j}{|z_j|^2}+\dfrac{j}{|z_{n+j}|^2}\right)\right)\tilde{H}.
    \end{align*}
Therefore, $\text{rank}\,dF_{[z]}=1$ holds if and only if 
\begin{equation*}
    \text{Im}\,f = 0 \hspace{2mm}\text{and} \hspace{3mm} \dfrac{1}{|z_{2n+3}|^2}+\dfrac{1}{|z_{2n+4}|^2}-\sum\limits_{j=1}^{2n} \dfrac{1}{|z_j|^2} = 0
  \end{equation*}
  holds.
\end{proof}
\vspace{3mm}
\subsection{The types of the singularities}

Finally, we determine the types of the singularities of $F$. Before that, recall that the symbol $O^{j,k}$ denotes the $j$-by-$k$ zero matrix. (When $j=k$, we simply write $O^j$.) 

First, we recall the method of Lagrange multipliers. 
Consider a function $h:\mathbb{R}^m\longrightarrow\mathbb{R}$ and a map $v=(v_1,\ldots, v_l):\mathbb{R}^m\longrightarrow\mathbb{R}^l$. Let $0\in \mathbb{R}^l$ be a regular value of $v$, and let $p=(\bm{\xi}(p), \bm{\eta}(p)) = \bigl(\xi_1(p), \xi_2(p), \ldots, \xi_{m-l}(p),\allowbreak \eta_{1}(p),  \eta_{2}(p),\allowbreak \ldots, \eta_{l}(p)\bigr)\in v^{-1}(0)$ be a critical point of the restriction $h|_{v^{-1}(0)}$. Regarding the Jacobian matrix of $v$ at $p$, denoted by $dv_p = \Bigl(\,\left(\pdv{v}{\xi}\right)\text{\raisebox{-2pt}{\vdots}} \,\left(\pdv{v}{\eta}\right)\Bigr)$, we assume that the submatrix $\left(\pdv{v}{\eta}\right)$ is non-singular. Then, by the implicit function theorem, there exists a map $\varphi$ from an open neighborhood of $\bm{\xi}(p)\in\mathbb{R}^{m-l}$ to an open neighborhood of $\bm{\eta}(p)\in\mathbb{R}^l$ such that the map $\Phi(\bm{\xi})=(\bm{\xi},\varphi(\bm{\xi}))$ provides a chart for $v^{-1}(0)$ around $p$. Furthermore, the relation
$d\varphi = -\left(\pdv{v}{\eta}\right)^{-1}\Bigl(\pdv{v}{\xi}\Bigr)$ holds at the point $\bm{\xi}(p)$.

\begin{lemma}\label{lem:laghessian}
Define a function $f_0:\mathbb{R}^{m+l}\longrightarrow\mathbb{R}$ with Lagrange multipliers $\bm{\lambda}=(\lambda_1, \ldots, \lambda_l)$ by 
\begin{equation*}
f_0 = -\sum_{j=1}^l\lambda_jv_j.
\end{equation*}
Furthermore, for the function $F_h = h+f_0$, let $d^2F_h(p,\bm{\lambda})$ be the Hessian of $F_h$ at a point $(p,\bm{\lambda})$ satisfying the equation $dF_h(p,\bm{\lambda})=0$, denoted by 
\begin{equation*}
  d^2F_h(p, \bm{\lambda}) =
  \begin{pNiceMatrix}
    \Block{1-4}{\huge{(F_h)_{\xi\xi}}}&&&&\Vdots&\Block{1-4}{\huge{(F_h)_{\xi\eta}}}&&&&\Vdots&\Block{1-4}{\huge{(F_h)_{\xi\lambda}}}&&& \\
    \myhdottedline\\[-10pt]
    \Block{1-4}{\huge{(F_h)_{\eta\xi}}}&&&& &\Block{1-4}{\huge{(F_h)_{\eta\eta}}}&&&& &\Block{1-4}{\huge{(F_h)_{\eta\lambda}}}&&& \\
    \myhdottedline\\[-10pt]
    \Block{1-4}{\huge{(F_h)_{\lambda \xi}}}&&&& &\Block{1-4}{\huge{(F_h)_{\lambda \eta}}}&&&& &\Block{1-4}{\huge{(F_h)_{\lambda \lambda}}}&&& 
  \end{pNiceMatrix}
  \text{\raisebox{-19pt}{.}}
\end{equation*}
Then, the Hessian $d^2(h\circ \Phi)(\bm{\xi}(p))$ of the function $h\circ \Phi$ at the point $\bm{\xi}(p)$ satisfies
\begin{align*}
  d^2(h\circ \Phi)(\bm{\xi}(p)) = (F_h)_{\xi\xi}+(F_h)_{\xi\eta}d\varphi_{\bm{\xi}(p)}+{}^t\!((F_h)_{\xi\eta}d\varphi_{\bm{\xi}(p)})+{}^t\!(d\varphi_{\bm{\xi}
(p)})(F_h)_{\eta\eta}(d\varphi_{\bm{\xi}(p)})\text{\raisebox{-2pt}{.}}
\end{align*}
\end{lemma}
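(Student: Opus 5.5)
The statement to prove is Lemma~\ref{lem:laghessian}, a second-order chain rule for the constrained function $h\circ\Phi$. The plan is to differentiate $h\circ\Phi$ twice using $\Phi(\bm{\xi})=(\bm{\xi},\varphi(\bm{\xi}))$, and to use the constraint identity $v\circ\Phi\equiv 0$ to trade the second derivatives of $\varphi$ for Lagrange-multiplier terms.

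First I use that $v\circ\Phi\equiv 0$ near $\bm{\xi}(p)$. This gives $f_0(\Phi(\bm{\xi}),\bm{\lambda})=0$ for every fixed $\bm{\lambda}$, so
\[
h\circ\Phi=(h+f_0)(\Phi(\bm{\xi}),\bm{\lambda})=F_h(\Phi(\bm{\xi}),\bm{\lambda}),
\]
where $\bm{\lambda}$ is frozen at the multiplier value for which $dF_h(p,\bm{\lambda})=0$. The Hessian of $h\circ\Phi$ at $\bm{\xi}(p)$ therefore equals the Hessian of $G(\bm{\xi}):=F_h(\bm{\xi},\varphi(\bm{\xi}),\bm{\lambda})$. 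This step is where the freedom to choose $\bm{\lambda}$ pays off, since the constrained function is unchanged by adding $f_0$.

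Next I apply the chain rule twice to $G$. The first derivative is
\[
\partial_{\xi_a}G=(F_h)_{\xi_a}+\sum_b (F_h)_{\eta_b}\,\partial_{\xi_a}\varphi_b .
\]
Differentiating again gives
\[
\partial_{\xi_c}\partial_{\xi_a}G=(F_h)_{\xi_a\xi_c}+\sum_b (F_h)_{\xi_a\eta_b}\partial_{\xi_c}\varphi_b+\sum_b (F_h)_{\eta_b\xi_c}\partial_{\xi_a}\varphi_b+\sum_{b,d}(F_h)_{\eta_b\eta_d}\partial_{\xi_a}\varphi_b\partial_{\xi_c}\varphi_d+\sum_b (F_h)_{\eta_b}\,\partial_{\xi_c}\partial_{\xi_a}\varphi_b .
\]
At $(p,\bm{\lambda})$ we have $(F_h)_{\eta_b}=0$ because $dF_h(p,\bm{\lambda})=0$. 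The last term, the only one involving second derivatives of $\varphi$, therefore drops out.

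Writing the remaining terms in matrix form gives
\[
(F_h)_{\xi\xi}+(F_h)_{\xi\eta}\,d\varphi+{}^t\!\bigl((F_h)_{\xi\eta}\,d\varphi\bigr)+{}^t\!(d\varphi)\,(F_h)_{\eta\eta}\,d\varphi .
\]
Here I use symmetry of the full Hessian, $(F_h)_{\eta\xi}={}^t(F_h)_{\xi\eta}$, to identify the third term as the transpose of the second.

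The only subtle point is the first step: one must differentiate $F_h$ with $\bm{\lambda}$ held fixed rather than along a varying $\bm{\lambda}$. If $\bm{\lambda}$ varied, the $\lambda$-blocks of $d^2F_h$ would not vanish in the expansion. With $\bm{\lambda}$ fixed, the $\lambda$-rows and $\lambda$-columns of $d^2F_h$ never enter the computation. The rest is routine bookkeeping with indices.
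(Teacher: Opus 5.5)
Your proof is correct. Since $v\circ\Phi\equiv 0$, you can write $h\circ\Phi=F_h(\Phi(\cdot),\bm{\lambda})$ with $\bm{\lambda}$ frozen; after applying the chain rule twice, the only term containing second derivatives of $\varphi$ is multiplied by $(F_h)_{\eta}(p,\bm{\lambda})=0$, which leaves exactly the four stated terms. The paper states this lemma without proof, as a recalled fact about Lagrange multipliers, and your argument is the standard derivation it implicitly relies on.
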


Let $[z]\in M_n$ be a singularity of the map $F$. The information required to determine the type of the singularity is summarized in the following lemma.
For any $j,k =1,\ldots, 2n, 2n+3, 2n+4$, let $R_{j,k}, I_{j,k}:\mathbb{C}^{2n+4}\longrightarrow \mathbb{R}$ be the functions defined by
\begin{align*}
   R_{j,k} &= \left\{
    \begin{matrix}
    \text{Re}\left(\dfrac{f}{z_j z_k}\right) & (j,k=1,\ldots,2n), \\[3ex]
    \text{Re}\left(\dfrac{f}{z_j \bar{z}_k}\right) & 
    {\begin{pmatrix}j=1,\ldots,2n \\ k=2n+3, 2n+4
    \end{pmatrix}}
    \text{\raisebox{-7pt}{,}}\\[3ex]
    \text{Re}\left(\dfrac{f}{\bar{z}_j\bar{z}_k}\right) & (j, k=2n+3, 2n+4),
    \end{matrix}\right. 
\\[3ex]
I_{j,k} &= \left\{
    \begin{matrix}
    \text{Im}\left(\dfrac{f}{z_j z_k}\right) & (j,k=1,\ldots,2n), \\[3ex]
    \text{Im}\left(\dfrac{f}{z_j \bar{z}_k}\right) & {\begin{pmatrix}j=1,\ldots,2n \\ k=2n+3, 2n+4
    \end{pmatrix}}
    \text{\raisebox{-7pt}{,}}\\[3ex]
    \text{Im}\left(\dfrac{f}{\bar{z}_j\bar{z}_k}\right) & (j, k=2n+3, 2n+4).
    \end{matrix} \right.
\end{align*}
\vspace{3mm}
\begin{lemma}
The Jacobian matrix of the map $v$ defined by (\ref{defofv}) and the Hessians of the functions $\tilde{J}$ and $\tilde{H}$ are given as follows:
\end{lemma}
\begin{gather*}
dv=
     {\addtolength{\arraycolsep}{-2pt}
  \begin{pNiceMatrix}
  \Block{4-6}{V}&&&&&&\Block{2-2}{O^{n,2}}&&\Block{2-2}{V_{2n+2}}&&\Block{2-2}{O^{n,2}}&&\Block{2-2}{W_{2n+4}}&\\
  &&&&&&&&&&&&&\\
  &&&&&&\Block{2-2}{V_{2n+1}}&&\Block{2-2}{O^{n,2}}&&\Block{2-2}{W_{2n+3}}&&\Block{2-2}{O^{n,2}}&\\
  &&&&&&&&&&&&&\\
  \Block{2-6}{O^{2,4n}}&&&&&&-x_{2n+1}&-y_{2n+1}&-x_{2n+2}&-y_{2n+2}&&&&\\
  &&&&&&-x_{2n+1}&-y_{2n+1}&&&x_{2n+3}&y_{2n+3}&-x_{2n+4}&-y_{2n+4}
   \CodeAfter
    \tikz \draw [dotted] (1-|7) -- (7-|7);
    \tikz \draw [dotted] (1-|9) -- (5-|9);
    \tikz \draw [dotted] (1-|11) -- (5-|11);
    \tikz \draw [dotted] (1-|13) -- (5-|13);
    \tikz \draw [dotted] (3-|7) -- (3-|15);
    \tikz \draw [dotted] (5-|1) -- (5-|15);
    \end{pNiceMatrix}\text{\raisebox{-31pt}{,}}
 }\\[3ex]
  d^2\tilde{J}=
  {\addtolength{\arraycolsep}{-2pt}
  \begin{pNiceMatrix}
    \Block{2-6}{O^{4n+4}}&&&&&&\Vdots&&&&\\
    &&&&&& &&&&\\
    \myhdottedline\\[-10pt]
    &&&&&& &-1&&&\\
    &&&&&& &&-1&&\\
  &&&&&& &&&1&\\
  &&&&&& &&&&1
  \end{pNiceMatrix}
  }\text{\raisebox{-35pt}{,}}
  \hspace{5mm}
  d^2\tilde{H} = 
    \begin{pNiceMatrix}
    \Block{2-4}{\raisebox{1pt}{$A_{\tilde{H}}$}} &&&&\Vdots& \Block{2-2}{O^{4n,4}} &&\Vdots& \Block{2-2}{\huge{B_{\tilde{H}}}} & \\
     &&&&&&&& \\
     \myhdottedline\\[-10pt]
    \Block{2-4}{\raisebox{10pt}{$O^{4,4n}$}} &&&&& \Block{2-2}{\raisebox{10pt}{$O^4$}} &&& \Block{2-2}{\raisebox{10pt}{$O^4$}} & \\
     &&&&&&&&& \\
     \myhdottedline\\[-10pt]
    \Block{2-4}{\raisebox{12pt}{${}^t\!B_{\tilde{H}}$}} &&&&& \Block{2-2}{\raisebox{10pt}{$O^4$}} &&& \Block{2-2}{\raisebox{10pt}{$C_{\tilde{H}}$}} & \\
     &&&&&&&&& \\
    \end{pNiceMatrix}
    \text{\raisebox{-35pt}{,}}
\end{gather*}
where
\begin{align*}
V&=
{\addtolength{\arraycolsep}{-3pt}
\begin{pmatrix}
  -x_1&-y_1&&&\\
  &&\ddots&&\\
  &&&-x_{2n}&-y_{2n}
\end{pmatrix}\text{\raisebox{-15pt}{,}}
}\hspace{4mm}
V_k=
{\addtolength{\arraycolsep}{-3pt}
\begin{pmatrix}
  0&0\\
  -x_k&-y_k\\
  \vdots&\vdots\\
  (1-j)x_k&(1-j)y_{k}\\
  \vdots&\vdots\\
  (1-n)x_{k}&(1-n)y_{k}
\end{pmatrix}\text{\raisebox{-35pt}{,}}
}\\
W_k&=
{\addtolength{\arraycolsep}{-3pt}
\begin{pmatrix}
  -x_{k}&-y_{k}\\
  \vdots&\vdots\\
  -x_{k}&-y_{k}\\
\end{pmatrix}\text{\raisebox{-15pt}{,}}
}\\[3ex]
A_{\tilde{H}} &= 
  {\addtolength{\arraycolsep}{-3pt}
    \begin{pNiceMatrix}
    0&0&R_{1,2}&-I_{1,2}&R_{1,3}&-I_{1,3}&\ldots&R_{1,2n}&-I_{1,2n}\\
    0&0&-I_{1,2}&-R_{1,2}&-I_{1,3}&-R_{1,3}&\ldots&-I_{1,2n}&-R_{1,2n}\\
    R_{1,2}&-I_{1,2}&0&0&R_{2,3}&-I_{2,3}&\ldots&R_{2,2n}&-I_{2,2n}\\
    -I_{1,2}&-R_{1,2}&0&0&-I_{2,3}&-R_{2,3}&\ldots&-I_{2,2n}&-R_{2,2n}\\
    R_{1,3}&-I_{1,3}&R_{2,3}&-I_{2,3}&0&0&\ldots&R_{3,2n}&-I_{3,2n}\\
    -I_{1,3}&-R_{1,3}&-I_{2,3}&-R_{2,3}&0&0&\ldots&-I_{3,2n}&-R_{3,2n}\\
    \vdots&\vdots&\vdots&\vdots&\vdots&\vdots&\ddots&\vdots&\vdots\\
    R_{1,2n}&-I_{1,2n}&R_{2,2n}&-I_{2,2n}&R_{3,2n}&-I_{3,2n}&\ldots&0&0\\
    -I_{1,2n}&-R_{1,2n}&-I_{2,2n}&-R_{2,2n}&-I_{3,2n}&-R_{3,2n}&\ldots&0&0 
    \end{pNiceMatrix}
    }\text{\raisebox{-52pt}{,}}\\[3ex]
    B_{\tilde{H}}&=
  {\addtolength{\arraycolsep}{-3pt}
    \begin{pmatrix}
      R_{1, 2n+3} & I_{1, 2n+3} & R_{1, 2n+4} & I_{1, 2n+4} \\
      -I_{1, 2n+3} & R_{1, 2n+3} & -I_{1, 2n+4} & R_{1, 2n+4} \\
      R_{2, 2n+3} & I_{2, 2n+3} & R_{2, 2n+4} & I_{2, 2n+4} \\
      -I_{2, 2n+3} & R_{2, 2n+3} & -I_{2, 2n+4} & R_{2, 2n+4} \\
      \vdots&\vdots&\vdots&\vdots\\
      R_{2n, 2n+3} & I_{2n, 2n+3} & R_{2n, 2n+4} & I_{2n, 2n+4} \\
      -I_{2n, 2n+3} & R_{2n, 2n+3} & -I_{2n, 2n+4} & R_{2n, 2n+4} 
    \end{pmatrix}
    }\text{\raisebox{-40pt}{,}}\\[3ex]
  C_{\tilde{H}}&= 
    {\addtolength{\arraycolsep}{-3pt}
    \begin{pNiceMatrix}
     0&0&\Vdots&R_{2n+3,2n+4}&I_{2n+3,2n+4}\\
     0&0& &I_{2n+3,2n+4}&-R_{2n+3,2n+4}\\
     \myhdottedline\\[-10pt]
     R_{2n+3,2n+4}&I_{2n+3,2n+4}& &0&0\\ 
     I_{2n+3,2n+4}&-R_{2n+3,2n+4}& &0&0 
    \end{pNiceMatrix}
    }\text{\raisebox{-20pt}{.}}\\
    \end{align*}

We express the map $v$ in components as $v=(v_1,\ldots,v_{2n+2})$. Let $f_0:\mathbb{C}^{2n+4}\longrightarrow\mathbb{R}$ be the function defined by 
\begin{equation*}
f_0 = -\sum_{j=1}^{2n+2}\lambda_jv_j,
\end{equation*}
where $\lambda_1,\ldots,\lambda_{2n+2}$ are Lagrange multipliers. We also set $F_{\tilde{J}}=\tilde{J}+f_0$ and $F_{\tilde{H}}=\tilde{H}+f_0$.

When a point $[z]$ is a singularity of rank\,$0$, the procedure for determining the type of the singularity is as follows:

\begin{enumerate}[label=Step\,\arabic*. ]
  \item Solve the equations $dF_{\tilde{J}}(z, \bm{\lambda}) = 0$ and $dF_{\tilde{H}}(z, \bm{\lambda}') = 0$ for $\bm{\lambda}$ and $\bm{\lambda}'$, respectively.
  \item Using Lemma \ref{lem:laghessian}, calculate the representation matrices of $d^2(\tilde{J}\circ\Phi)_{(\Phi^{-1}(z))}$ and $d^2(\tilde{H}\circ\Phi)_{(\Phi^{-1}(z))}$ based on $d^2F_{\tilde{J}}(z)$, $d^2F_{\tilde{H}}(z)$, respectively, and $d\varphi_{(\Phi^{-1}(z))}$. In addition, calculate the representation matrix of the pullback $(\Phi^*\omega_{\text{std}})_{\Phi^{-1}(z)}$ of the standard symplectic form $\omega_{\text{std}}$ on $\mathbb{C}^{2n+4}$ at $z$ via $\Phi$.
  \item Let $\{q_1, \ldots, q_{2n+6}\}$ be a basis of $T_{\Phi^{-1}(z)}\mathbb{R}^{2n+6}$ obtained by extending a basis $\{q_1, \ldots, q_{2n+2}\}$ of $d\Phi^{-1}(X_K)_z$. The $4$-by-$4$ submatrices, consisting of components from $(2n+3,2n+3)$ to $(2n+6,2n+6)$, of the matrices obtained by pulling back $d^2(\tilde{J}\circ\Phi)_{(\Phi^{-1}(z))}$, $d^2(\tilde{H}\circ\Phi)_{(\Phi^{-1}(z))}$, and $(\Phi^*\omega_{\text{std}})_{\Phi^{-1}(z)}$ via the basis $\{q_1,\ldots,q_{2n+6}\}$ are the representation matrices of the bilinear forms $d^2J_{[z]}$, $d^2H_{[z]}$, and $\omega_{[z]}$ on $\mathbb{R}^{4}\cong T_{[z]}v^{-1}(0)/K$, respectively. We denote these matrices by $d^2J$, $d^2H$, and $\Omega$, respectively.
  \item If $d^2J$ and $d^2H$ are linearly dependent, the singularity $[z]$ is degenerate. Otherwise, proceed to Step 5.
  \item If $a_1d^2J+a_2d^2H$ is a singular matrix for any $a_1, a_2\in \mathbb{R}$, then $[z]$ is degenerate. Otherwise, 
choose real numbers $a_1, a_2$ such that $a_1d^2J+a_2d^2H$ is non-singular and solve the characteristic equation (\ref{chreq}).
  \item Determine the type of the singularity $[z]$ based on (\ref{chtype1}) and (\ref{chtype2}).
\end{enumerate}
\vspace{3mm}

When a point $[z]$ is a singularity of rank\,$1$, the procedure is as follows:

\begin{enumerate}[label=Step\,\arabic*.]
  \item Find $(b_1, b_2) \in \mathbb{R}^2 \setminus \{0\}$ satisfying $b_1dJ_{[z]} + b_2dH_{[z]} = 0$. Then, the function $g = b_1J + b_2H$ satisfies $dg = 0$ at the point $[z]$. We then set $\tilde{g} = b_1\tilde{J} + b_2\tilde{H}$ and $F_{\tilde{g}} = \tilde{g} + f_0$.
  \item Solve the equation $dF_{\tilde{g}}(z, \bm{\lambda}) = 0$ for $\bm{\lambda}$.
  \item Using Lemma \ref{lem:laghessian}, calculate the representation matrix of $d^2(\tilde{g}\circ \Phi)_{\Phi^{-1}(z)}$ based on $d^2F_{\tilde{g}}(z)$ and $d\varphi_{\Phi^{-1}(z)}$. Additionally, calculate the representation matrix of the pullback $(\Phi^*\omega_{\text{std}})_{\Phi^{-1}(z)}$ of the standard symplectic form $\omega_{\text{std}}$ on $\mathbb{C}^{2n+4}$ at $z$ via $\Phi$.
  \item Suppose that $L_{[z]} \subset T_{[z]}M_n$ is spanned by $X_J([z])$. (Namely, we assume $dJ_{[z]} \neq 0$. If $dJ_{[z]} = 0$, then $dH_{[z]} \neq 0$ must hold, and we take $X_H([z])$ as the spanning vector of $L_{[z]}$.)
  Let $\{q_1, \dots, q_{2n+2}, d\Phi^{-1}(X_{\tilde{J}})(z), q_{2n+4}, q_{2n+5},q_{2n+6}\}$ be a basis of $T_{\Phi^{-1}(z)}\mathbb{R}^{2n+6}$ obtained by extending a basis $\{q_1, \ldots, q_{2n+2}\}$ of $d\Phi^{-1}(X_K)_z$. We pull back $d^2(\tilde{g}\circ\Phi)_{(\Phi^{-1}(z))}$ and $(\Phi^*\omega_{\text{std}})_{\Phi^{-1}(z)}$ via this basis. Then, the $4$-by-$4$ submatrices, consisting of the entries from $(2n+3, 2n+3)$ to $(2n+6, 2n+6)$ of the resulting representation matrices, are the representation matrices of the bilinear forms $d^2g_{[z]}$ and $\omega_{[z]}$ on $\mathbb{R}^4 \cong T_{[z]}v^{-1}(0)/K$. We denote these matrices by $d^2G$ and $\Omega$, respectively.
  \item Let $\{e_1, e_2, e_3, e_4\}$ be the standard basis of $\mathbb{R}^4$. Since $L$ is spanned by $e_1$, we can explicitly determine $L^\omega$ using $\Omega$. Based on this, calculate the representation matrices $\widehat{d^2G}$ and $\widehat{\Omega}$ of the induced forms $\widehat{d^2g}_{[z]}$ and $\widehat{\omega}_{[z]}$ on $L^{\omega}/L$, respectively. Using these, solve the characteristic equation (\ref{chreq}).
  \item Determine the type of the singularity $[z]$ by referring to (\ref{chtype1}) and (\ref{chtype2}).
\end{enumerate}

First, we determine the type of the singularities $[z]=[p^{k,l}]$.

\subsubsection{$[z]=[p^{k,l}]$ $\mathrm{(}$The case where neither $k$ nor $l$ is $2n+1$ or $2n+2 \mathrm{)}$}\mbox{}\\

Recall that each point $[z]$ is a singularity of rank\,$0$. The essential part of the computation for the singularity type is fully captured by the case $k=2n+3$ and $l=2n+4$. In what follows, we describe the calculation process for this specific case in detail.

A representative of the point $[z]$ can be taken as $z=(x_1, \ldots, x_{2n+2},0,0)\in v^{-1}(0)$ with $x_j\in \mathbb{R}$. That is, we may assume that all components of the representative $z$ are real.

\begin{enumerate}[label=Step\,\arabic*. ]
  \item Solving the equations $dF_{\tilde{J}}(z, \bm{\lambda}) = 0$ and $dF_{\tilde{H}}(z, \bm{\lambda'}) = 0$ for $\bm{\lambda}$ and $\bm{\lambda'}$, we obtain $\bm{\lambda} = \bm{\lambda'} = 0$.

  \item The $d^2F_{\tilde{J}}(z)$ and $d^2F_{\tilde{H}}(z)$ are given by: 
  \vspace{3mm}
  \begin{gather*}
    d^2F_{\tilde{J}}(z) = 
    {\addtolength{\arraycolsep}{-3pt}
    \begin{pNiceMatrix}
    \Block{2-7}{O^{4n+4}}&&&&&&&\Vdots&&&& \\
    &&&&&&&&&&&\\
    \myhdottedline\\[-10pt]
    &&&&&&& &-1&&& \\
    &&&&&&& &&-1&& \\
    &&&&&&& &&&1& \\
    &&&&&&& &&&&1 \\
    \end{pNiceMatrix}
    }
    \text{\raisebox{-34pt}{,}}
    \hspace{5mm}
   d^2F_{\tilde{H}}(z) = 
    {\addtolength{\arraycolsep}{-3pt}
    \begin{pNiceMatrix}
    \Block{2-7}{O^{4n+4}}&&&&&&&\Vdots&&&& \\
    &&&&&&&&&&&\\
    \myhdottedline\\[-10pt]
    &&&&&&& &&&c&0 \\
    &&&&&&& &&&0&-c \\
    &&&&&&& &c&0&& \\
    &&&&&&& &0&-c&& \\
    \end{pNiceMatrix}
    }\text{\raisebox{-34pt}{,}}
  \end{gather*}
where $c=R_{2n+3,2n+4}$. Next, we compute $d\varphi_{\Phi^{-1}(z)}$. The matrix $\Bigl(\pdv{v}{\eta}\Bigr)$ consisting of the non-zero column vectors of $dv$ at $z$ is non-singular. Therefore, by the implicit function theorem, there exists a map $\varphi=(\varphi_1, \ldots, \varphi_{2n+2})$ from an open neighborhood of $0\in \mathbb{R}^{2n+6}$ to an open neighborhood of $(x_1, \ldots, x_{2n+2})\in \mathbb{R}^{2n+2}$ such that the map $\Phi = (\varphi_1, \xi_1, \ldots, \varphi_{2n+2}, \xi_{2n+2}, \allowbreak \xi_{2n+3}, \xi_{2n+4}, \xi_{2n+5}, \xi_{2n+6})$ provides a chart for $v^{-1}(0)$ around the point $z$. Moreover, in this case, since $\Bigl(\pdv{v}{\xi}\Bigr)=0$, we see that $d\varphi_{\Phi^{-1}(z)}=0$. 
Thus, by Lemma \ref{lem:laghessian}, we obtain 
\begin{align*}
    d^2(\tilde{J}\circ\Phi)_{(\Phi^{-1}(z))} &=
    {\addtolength{\arraycolsep}{-3pt}
    \begin{pNiceMatrix}
    \Block{2-7}{O^{2n+2}}&&&&&&&\Vdots&&&& \\
    &&&&&&&&&&&\\
    \myhdottedline\\[-10pt]
    &&&&&&& &-1&&& \\
    &&&&&&& &&-1&& \\
    &&&&&&& &&&1& \\
    &&&&&&& &&&&1 \\
    \end{pNiceMatrix}
    }
    \text{\raisebox{-34pt}{, }}\\[3ex]
   d^2(\tilde{H}\circ\Phi)_{(\Phi^{-1}(z))} &= 
    {\addtolength{\arraycolsep}{-3pt}
    \begin{pNiceMatrix}
    \Block{2-7}{O^{2n+2}}&&&&&&&\Vdots&&&& \\
    &&&&&&&&&&&&\\
    \myhdottedline\\[-10pt]
    &&&&&&& &&&c&0 \\
    &&&&&&& &&&0&-c \\
    &&&&&&& &c&0&& \\
    &&&&&&& &0&-c&& \\
    \end{pNiceMatrix}
    }\text{\raisebox{-34pt}{,}}
  \end{align*}
  and we also have 
  \vspace{3mm}
\begin{gather*}
\hspace{3mm}\left(\Phi^*\omega_{\text{std}}\right)_{\Phi^{-1}(z)} = 
 {\addtolength{\arraycolsep}{-3pt}
    \begin{pNiceMatrix}
    \Block{2-7}{O^{2n+2}}&&&&&&&\Vdots&&&& \\
    &&&&&&&&&&\\
    \myhdottedline\\[-10pt]
    &&&&&&& &0&1&& \\
    &&&&&&& &-1&0&& \\
    &&&&&&& &&&0&1 \\
    &&&&&&& &&&-1&0 \\
    \end{pNiceMatrix}
    }\text{\raisebox{-34pt}{.}}
\end{gather*}

\item A basis for $T_{\Phi^{-1}(z)}\mathbb{R}^{2n+6}$ can be taken as $\{e_1,\ldots,e_{2n+6}\}$. Here, $\{e_1, \ldots, e_{2n+2}\}$ constitutes a basis for $d\Phi^{-1}(X_K)_z$.
Therefore, the representation matrices $d^2J, d^2H$, and $\Omega$ are given by 
\begin{gather*}
  d^2J=
  {\addtolength{\arraycolsep}{-3pt}
  \begin{pmatrix}
    -1&&&\\
    &-1&&\\
    &&1&\\
    &&&1
  \end{pmatrix}
  }
  \text{\raisebox{-19pt}{,}}\hspace{5mm}
  d^2H=
  {\addtolength{\arraycolsep}{-3pt}
  \begin{pNiceMatrix}
    &&c&0\\
    &&0&-c\\
    c&0&&\\
    0&-c&&
  \end{pNiceMatrix}
  }
  \text{\raisebox{-19pt}{,}}\hspace{5mm}
  \Omega=
    {\addtolength{\arraycolsep}{-3pt}
  \begin{pNiceMatrix}
    0&1&&\\
    -1&0&&\\
    &&0&1\\
    &&-1&0
  \end{pNiceMatrix}
  }\text{\raisebox{-19pt}{.}}
\end{gather*}

\item Since $c\neq0$, $d^2J$ and $d^2H$ are linearly independent.

\item If we put $a_1, a_2=1$, the set of solutions to the characteristic equation $\det\left((a_1d^2J+a_2d^2H)\Omega^{-1}-tI\right)=0$ is $\{\pm c\pm i\}$ (for all combinations of signs).

\item Therefore, the singularity $[p^{2n+3, 2n+4}]$ is of focus-focus type.
\end{enumerate}
We provide a remark regarding the calculation process described above. Even in cases other than $(k,l)=(2n+3,2n+4)$, by a suitable choice of a basis for $T_{\Phi^{-1}(z)}\mathbb{R}^{2n+6}$, $d^2J$ and $\Omega$ remain identical to their counterparts in Step\,$3$ of the preceding calculation, while $d^2H$ merely involves a different non-zero real value for $c$. Consequently, for all $k$ and $l$ that are neither $2n+1$ nor $2n+2$, the singularities $[p^{k,l}]$ are of focus-focus type.
\vspace{10mm}

\subsubsection{$[z]=[p^{k,l}]$ $\mathrm{(}$The case where $k$ or $l$ is $2n+1$ or $2n+2 \mathrm{)}$}\mbox{}\\

Each point $[z]$ is a singularity of rank\,$1$. As in the previous case, since the quantities appearing in the calculation are similar across all $[p^{k,l}]$, we describe the computation for the case $(k,l)=(2n+2,2n+4)$ in detail here.

As before, we can take a representative $z$ for $[z]$ which is real and zero on the $(2n+2)$-th and $(2n+4)$-th components.
\begin{enumerate}[label=Step\,\arabic*.]
  \item Since $dJ_{[z]}=0$, we may set $g=J$ and $\tilde{g}=\tilde{J}$.
  
  \item Solving the equation $dF_{\tilde{g}}(z,\bm{\lambda})=0$ for $\bm{\lambda}=(\lambda_1, \ldots, \lambda_{2n+2})$, we obtain 
  \begin{gather*}
  \lambda_j =
     \begin{cases}
       1 & (j=2n+1),\\
       -1 & (j=2n+2),\\
       0 & (otherwise).
     \end{cases}
   \end{gather*}

  \item $d^2F_{\tilde{g}}(z)$ is given as follows:
  \vspace{3mm}
  \begin{gather*}
    d^2F_{\tilde{g}}(z) =
    {\addtolength{\arraycolsep}{-3pt}
    \begin{pNiceMatrix}
      \Block{2-7}{O^{4n+2}}&&&&&&&&&&&&&&\\
      &&&&&& &&&&&&&&\\
      &&&&&& &1&&&&&&&\\
      &&&&&& &&1&&&&&&\\
      &&&&&& &&&\Block{2-6}{O^4}&&&&&\\
      &&&&&& &&&&&&&&
      \CodeAfter
      \tikz \draw [dotted]
      (3-|1) -- (3-|10) -- (7-|10)
      (1-|8) -- (5-|8) -- (5-|16);
    \end{pNiceMatrix}
    }
    \text{\raisebox{-32pt}{.}}\\
  \end{gather*}
  Next, we compute $d\varphi_{\Phi^{-1}(z)}$. The matrix $\Bigl(\pdv{v}{\eta}\Bigr)$ consisting of the non-zero column vectors of $dv$ at $z$ is non-singular. Therefore, by the implicit function theorem, there exists a map $\varphi=(\varphi_1,\ldots,\varphi_{2n+2})$ from an open neighborhood of $0\in \mathbb{R}^{2n+6}$ to an open neighborhood of $(x_1,\ldots,x_{2n+1}, x_{2n+3})\in \mathbb{R}^{2n+2}$ such that the map $\Phi=(\varphi_1, \xi_1, \ldots, \varphi_{2n+1}, \xi_{2n+1},\xi_{2n+2},\xi_{2n+3},\varphi_{2n+2}, \allowbreak \xi_{2n+4},\xi_{2n+5}, \xi_{2n+6})$ provides a chart for $v^{-1}(0)$ around the point $z$. Moreover, in this case, since $\Bigl(\pdv{v}{\xi}\Bigr)=0$, we see that $d\varphi_{\Phi^{-1}(z)}=0$. Therefore, we have 
  \begin{gather*}
   \hspace{3mm} d^2(\tilde{g}\circ\Phi)(z) =
    {\addtolength{\arraycolsep}{-3pt}
    \begin{pNiceMatrix}
      \Block{2-7}{O^{2n+1}}&&&&&&&&&&&\\
      &&&&&&&&&&&&&&\\
      &&&&&&&1&&&&&&&\\
      &&&&&&&&1&&&&&&\\
      &&&&&&&&&\Block{2-6}{O^3}&&&&&\\
      &&&&&&&&&&&&&&
      \CodeAfter
      \tikz \draw [dotted]
      (3-|1) -- (3-|10) -- (7-|10)
      (1-|8) -- (5-|8) -- (5-|16);
    \end{pNiceMatrix}
    }
    \text{\raisebox{-32pt}{,}}
  \end{gather*}
  and we also have 
  \begin{gather*}
\left(\Phi^*\omega_{\text{std}}\right)_{\Phi^{-1}(z)} = 
{\addtolength{\arraycolsep}{-3pt}
\begin{pNiceMatrix}
    \Block{2-7}{O^{2n+1}}&&&&&&&\Vdots&&&&&\\
    &&&&&&& &&&&&\\
    \myhdottedline\\[-10pt]
    &&&&&&& &0&1&&& \\
    &&&&&&& &-1&0&&& \\
    &&&&&&& &&&0&& \\
    &&&&&&& &&&&0&1 \\
    &&&&&&& &&&&-1&0 \\
    \end{pNiceMatrix}
    }\text{\raisebox{-40pt}{.}}\\[-0.5ex]
\end{gather*}

\item Since $dJ_{[z]}=0$ and $\text{rank}\,dF_{[z]}=1$, $L_{[z]}\subset T_{[z]}M_n$ is spanned by $X_H([z])$. We take a basis of $T_{\Phi^{-1}(z)}\mathbb{R}^{2n+6}$ as $\{e_1,\ldots,e_{2n+1},e_{2n+4},e_{2n+6},e_{2n+2},e_{2n+3},\allowbreak e_{2n+5}\}$, where $\{e_j\}_{1\leq j\leq2n+6}$ denotes the set of the standard basis vectors for $\mathbb{R}^{2n+6}$. Note that the first $2n+2$ vectors form a basis of $d\Phi^{-1}(X_K)_z$  and we can take $e_{2n+6}$ instead of $d\Phi^{-1}(X_{\tilde{H}})(z)$ because $e_{2n+6}$ is linearly dependent on $d\Phi^{-1}(X_{\tilde{H}})(z)$. With respect to this basis of $T_{\Phi^{-1}(z)}\mathbb{R}^{2n+6}$, the $4$-by-$4$ submatrices, $d^2G$ and $\Omega$, from the $(2n+3,2n+3)$ to $(2n+6,2n+6)$ entries of the matrices representing $d^2(\tilde{g}\circ \Phi)_{\Phi^{-1}(z)}$ and $(\Phi^*\omega_{\text{std}})_{\Phi^{-1}(z)}$, respectively, are given as follows:\vspace{2mm}
\begin{gather*}
  d^2G =
  {\addtolength{\arraycolsep}{-3pt}
  \begin{pmatrix}
    0&&&\\
    &1&&\\
    &&1&\\
    &&&0
  \end{pmatrix}
  }
  \text{\raisebox{-19pt}{,}}\hspace{5mm}
  \Omega =
  {\addtolength{\arraycolsep}{-3pt}
  \begin{pmatrix}
    &&0&-1\\
    &&1&0\\
    0&-1&&\\
    1&0&&
  \end{pmatrix}
  }\text{\raisebox{-20pt}{.}}
\end{gather*}

\item Therefore, we have
\begin{gather*}
\widehat{d^2G}=
{\addtolength{\arraycolsep}{-3pt}
  \begin{pmatrix}
    1&0\\
    0&1
  \end{pmatrix}
  }
  \text{\raisebox{-7pt}{,}}\hspace{5mm}
  \widehat{\Omega}=
  {\addtolength{\arraycolsep}{-3pt}
  \begin{pmatrix}
    0&1\\
    -1&0
  \end{pmatrix}
  }\text{\raisebox{-8pt}{.}}
\end{gather*}
Then, the solution set of the characteristic equation $\det\left(\widehat{d^2G}\,\widehat{\Omega}^{-1}-tI\right)=0$ is $\{\pm i\}$.

\item Consequently, the point $[z]$ is a singularity of Elliptic-Regular type.
\end{enumerate}

For other choices of $k$ and $l$, the matrices $\widehat{d^2G}$ and $\widehat{\Omega}$ obtained in Step $5$ are identical to those in the computation example above. Therefore, the points $[p^{k,l}]$ for which either $k$ or $l$ is $2n+1$ or $2n+2$ are singularities of Elliptic-Regular type.
\vspace{9mm}

This completes the determination of all types of the singularities $[z]=[p^{k,l}]$. Next, we determine the types of the singularities $[z]=[p^k]$. In this case, we recall that a point $[z]$ is a singularity if and only if $k=2n+1$ or $2n+2$, and in both case, the singularity $[z]$ can be of rank\,$0$ or $1$ (see Table\,\ref{table:sing}). In this part, we describe the details of the calculation only for the case $k=2n+1$ (the case $k=2n+2$ is completely analogous).

\subsubsection{$[z]=[p^{2n+1}]$ $\mathrm{(rank\,1)}$}\mbox{}\\

A representative of the point $[z]$ can be taken as $z=(x_1,\ldots,x_{2n},0,x_{2n+2},x_{2n+3},\allowbreak z_{2n+4})\in v^{-1}(0)$ with $x_j\in \mathbb{R}$ and $z_{2n+4}\in \mathbb{C}$. 

\begin{enumerate}[label=Step\,\arabic*. ]
  \item Since $dJ_{[z]}=0$, we may set $g=J$ and $\tilde{g}=\tilde{J}$. 
  \item Solving the equation $dF_{\tilde{g}}(z,\lambda)=0$ for $\lambda=(\lambda_1, \ldots, \lambda_{2n+2})$, we obtain 
 \begin{gather*}
  \lambda_j =
     \begin{cases}
       -1 & (j=2n+2),\\
       0 & (otherwise).
     \end{cases}
   \end{gather*}

  \item $d^2F_{\tilde{g}}(z)$ is given as follows:
  \begin{gather*}
    d^2F_{\tilde{g}}(z) =
    {\addtolength{\arraycolsep}{-3pt}
    \begin{pNiceMatrix}[margin]
      \Block{2-6}{O^{4n}}&&&&&&&&&&&&&\\
      &&&&&&&&&&&&&\\
      &&&&&&-1&&&&&&&\\
      &&&&&&&-1&&&&&&\\
      &&&&&&&&\Block{2-6}{O^{6}}&&&&&\\
      &&&&&&&&&&&&&
      \CodeAfter
      \tikz \draw [dotted]
      (1-|7) -- (5-|7) -- (5-|15)
      (3-|1) -- (3-|9) -- (7-|9);
    \end{pNiceMatrix}
    }
    \text{\raisebox{-32pt}{.}}
  \end{gather*}
Next, we compute $d\varphi_{\Phi^{-1}(z)}$. Since the matrix $\Bigl(\pdv{v}{\eta}\Bigr)$ obtained by collecting all non-zero column vectors of $dv_z$ excluding the last two columns is invertible, the implicit function theorem implies that there exists a map $\varphi=(\varphi_1,\ldots,\varphi_{2n+2})$ from an open neighborhood of $(0,\ldots,0,x_{2n+4},y_{2n+4})\in \mathbb{R}^{2n+6}$ to an open neighborhood of $(x_1,\ldots,x_{2n},x_{2n+2},x_{2n+3})\in \mathbb{R}^{2n+2}$ such that the map $\Phi=(\varphi_1,\xi_1,\ldots,\varphi_{2n},\xi_{2n},\xi_{2n+1},\xi_{2n+2},\varphi_{2n+1},\xi_{2n+3},\varphi_{2n+2},\allowbreak \xi_{2n+4},\xi_{2n+5},\xi_{2n+6})$ provides a chart of $v^{-1}(0)$ around the point $z$. Furthermore, letting $\Bigl(\pdv{v}{\xi}\Bigr)$ be the matrix obtained by removing $\Bigl(\pdv{v}{\eta}\Bigr)$ from the matrix $dv_z$, since $d\varphi_{\Phi^{-1}(z)} = -\left(\pdv{v}{y}\right)^{-1}\Bigl(\pdv{v}{\xi}\Bigr)$ (as mentioned before Lemma \ref{lem:laghessian}), we obtain  
  \begin{gather*}
  d\varphi_{\Phi^{-1}(z)}=
  {\addtolength{\arraycolsep}{-2pt}
    \begin{pNiceMatrix}
    \Block{5-10}{O^{2n+2,2n+4}}&&&&&&&&&&\Vdots&-\dfrac{x_{2n+4}}{x_1}&-\dfrac{y_{2n+4}}{x_1}\\
    &&&&&&&&&& &\vdots&\vdots\\
    &&&&&&&&&& &-\dfrac{x_{2n+4}}{x_{2n}}&-\dfrac{y_{2n+4}}{x_{2n}}\\
    &&&&&&&&&& &0&0\\
    &&&&&&&&&& &\dfrac{x_{2n+4}}{x_{2n+3}}&\dfrac{y_{2n+4}}{x_{2n+3}}\\
  \end{pNiceMatrix}\text{\raisebox{-41pt}{.}}
  }\\
\end{gather*}
  Therefore, we also have
  \begin{gather*}
    d^2(\tilde{g}\circ\Phi)(z) =
    {\addtolength{\arraycolsep}{-3pt}
    \begin{pNiceMatrix}
      \Block{2-6}{O^{2n}}&&&&&&&&&&&&&\\
      &&&&&&&&&&&&&\\
      &&&&&&-1&&&&&&&\\
      &&&&&&&-1&&&&&&\\
      &&&&&&&&\Block{2-6}{O^4}&&&&&\\
      &&&&&&&&&&&&&
      \CodeAfter
      \tikz \draw [dotted]
      (1-|7) -- (5-|7) -- (5-|15)
      (3-|1) -- (3-|9) -- (7-|9);
    \end{pNiceMatrix}
    }
    \text{\raisebox{-32pt}{,}}\\
    \end{gather*}
  and
  \begin{gather*}
\left(\Phi^*\omega_{\text{std}}\right)_{\Phi^{-1}(z)} = 
{\addtolength{\arraycolsep}{-3pt}
\begin{pNiceMatrix}
    \Block{2-5}{A_{\omega}}&&&&&\Vdots&\Block{2-2}{B_{\omega}}&\\
    &&&&& &&\\
    \myhdottedline\\[-10pt]
    \Block{2-5}{\raisebox{15pt}{$-{}^t\!B_{\omega}$}}&&&&& &0&1\\
    &&&&& &-1&0
    \end{pNiceMatrix}\text{\raisebox{-23pt}{.}}
    }\\
\end{gather*}
Here, $A_{\omega}$ is a $(2n+4)$-by-$(2n+4)$ submatrix of $\bigl(\Phi^*\omega_{\text{std}}\bigr)_{\Phi^{-1}(z)}$ whose entries from the $(2n+1,2n+1)$-th to the $(2n+2,2n+2)$-th positions form the block $\bigl(\begin{smallmatrix} 0&1\\-1&0 \end{smallmatrix}\bigr)$ and all other entries are zero, and $B_{\omega}$ is given by \vspace{3mm}
\begin{gather*}
B_{\omega}=
{\addtolength{\arraycolsep}{-2pt}
  \begin{pNiceMatrix}
    \dfrac{x_{2n+4}}{x_1}&\dfrac{y_{2n+4}}{x_1}\\
    \vdots&\vdots\\
    \dfrac{x_{2n+4}}{x_{2n}}&\dfrac{y_{2n+4}}{x_{2n}}\\
    0&0\\
    0&0\\
    0&0\\
    -\dfrac{x_{2n+4}}{x_{2n+3}}&-\dfrac{y_{2n+4}}{x_{2n+3}}\\
  \end{pNiceMatrix}\text{\raisebox{-52pt}{.}}
  }\\
\end{gather*}

\item Since $dJ_{[z]}=0$ and $\text{rank}\,dF_{[z]}=1$, $L_{[z]}\subset T_{[z]}M_n$ is spanned by $X_H([z])$. 
In the following, only in Step $4$, we divide the argument into cases depending on whether the point $z$ satisfies $y_{2n+4}=0$ or not, which are presented in Step $4.1$ and Step $4.2$, respectively. From Step $5$, we can deal with both cases together, so we do not consider separate cases.
    \item[\theenumi1.] Now, the matrix whose columns form a basis of $T_{\Phi^{-1}(z)}\mathbb{R}^{2n+6}$ is given by
    \begin{gather*}
    {\addtolength{\arraycolsep}{-3pt}
  \begin{pNiceMatrix}
    x_1&&&&&&\Vdots&R_1&&&\\
    &x_2&&&&& &R_2&&&\\
    &&\ddots&&&& &\vdots&&&\\
    &&&x_{2n}&&& &R_{2n}&&&\\
    &&&&&& &&1&&\\
    &&&&&& &&&1&\\
    \myhdottedline\\[-10pt]
    &&&&x_{2n+2}&& &&&&\\
    &&&&&-x_{2n+3}& &R_{2n+3}&&&\\
    &&&&&& &&&&1\\
    x_{2n+4}&x_{2n+4}&\cdots&x_{2n+4}&&x_{2n+4}& &R_{2n+4}&&&
  \end{pNiceMatrix}\text{\raisebox{-62pt}{.}}
  }
  \end{gather*}
  Then, the set of column vectors to the left of the vertical dotted line forms a basis of $d\Phi^{-1}(X_K)_z$, while the column vector to the right of the line is $d\Phi^{-1}\bigl(X_{\tilde{H}}(z)\bigr)$.  With respect to this basis of $T_{\Phi^{-1}(z)}\mathbb{R}^{2n+6}$, the $4$-by-$4$ submatrices, $d^2G$ and $\Omega$, from the $(2n+3,2n+3)$ to $(2n+6,2n+6)$ entries of the matrices representing $d^2(\tilde{g}\circ \Phi)_{\Phi^{-1}(z)}$ and $(\Phi^*\omega_{\text{std}})_{\Phi^{-1}(z)}$, respectively, are given as follows:
\begin{gather*}
  d^2G =
  {\addtolength{\arraycolsep}{-3pt}
  \begin{pmatrix}
    0&&&\\
    &-1&&\\
    &&-1&\\
    &&&0
  \end{pmatrix}
  }
  \text{\raisebox{-18pt}{,}}\hspace{5mm}
  \Omega =
  {\addtolength{\arraycolsep}{-3pt}
  \begin{pmatrix}
    &&&c\\
    &0&1&\\
    &-1&0&\\
    -c&&&
  \end{pmatrix}
  }\text{\raisebox{-18pt}{,}}
\end{gather*}
where $c\neq0$.
  
    \item[\theenumi2.] Similarly to the previous case, in the following matrix, 
\begin{gather*}
    {\addtolength{\arraycolsep}{-4pt}
    \begin{pNiceMatrix}
    x_1&&&&&&\Vdots&R_1&&&\\
    &x_2&&&&& &R_2&&&\\
    &&\ddots&&&& &\vdots&&&\\
    &&&x_{2n}&&& &R_{2n}&&&\\
    &&&&&& &&1&&\\
    &&&&&& &&&1&\\
    \myhdottedline\\[-10pt]
    &&&&x_{2n+2}&& &&&&\\
    &&&&&-x_{2n+3}& &R_{2n+3}&&&\\
    -y_{2n+4}&-y_{2n+4}&\ldots&-y_{2n+4}&&-y_{2n+4}& &&&&-y_{2n+4}\\
    x_{2n+4}&x_{2n+4}&\ldots&x_{2n+4}&&x_{2n+4}& &R_{2n+4}&&&x_{2n+4}
  \end{pNiceMatrix}\text{\raisebox{-60pt}{,}}
  }
  \end{gather*}
  the set of column vectors to the left of the vertical dotted line forms a basis of $d\Phi^{-1}(X_K)_z$, and the column vector immediately to the right of the line is $d\Phi^{-1}\bigl(X_{\tilde{H}}(z)\bigr)$. With respect to this basis of $T_{\Phi^{-1}(z)}\mathbb{R}^{2n+6}$, the $4$-by-$4$ submatrices, $d^2G$ and $\Omega$, from the $(2n+3,2n+3)$ to $(2n+6,2n+6)$ entries of the matrices representing $d^2(\tilde{g}\circ \Phi)_{\Phi^{-1}(z)}$ and $(\Phi^*\omega_{\text{std}})_{\Phi^{-1}(z)}$, respectively, are given as follows:
\begin{gather*}
  d^2G =
  {\addtolength{\arraycolsep}{-3pt}
  \begin{pmatrix}
    0&&&\\
    &-1&&\\
    &&-1&\\
    &&&0
  \end{pmatrix}
  }
  \text{\raisebox{-18pt}{,}}\hspace{5mm}
  \Omega =
  {\addtolength{\arraycolsep}{-3pt}
  \begin{pmatrix}
    &&&\text{Im}f\\
    &0&1&\\
    &-1&0&\\
      -\text{Im}f&&&
  \end{pmatrix}
  }\text{\raisebox{-20pt}{.}}
\end{gather*}
\item Since $L_{[z]}$ is generated by $e_1$, we have $L_{[z]}^{\omega}=\langle e_1,e_2,e_3 \rangle$. Therefore, $\widehat{d^2G}$ and $\widehat{\Omega}$ are as follows:
\begin{gather*}
\widehat{d^2G}=
{\addtolength{\arraycolsep}{-3pt}
  \begin{pmatrix}
    -1&0\\
    0&-1
  \end{pmatrix}
  }
  \text{\raisebox{-6pt}{,}}\hspace{5mm}
  \widehat{\Omega}=
  {\addtolength{\arraycolsep}{-3pt}
  \begin{pmatrix}
    0&1\\
    -1&0
  \end{pmatrix}
  }\text{\raisebox{-6pt}{,}}
\end{gather*}
and the solution set of the characteristic equation $\det \bigl(\widehat{d^2G}\,\widehat{\Omega}^{-1}-tI\bigr)=0$ is $\{\pm i\}$.

\item Consequently, the point $[z]$ is a singularity of Elliptic-Regular type.
\end{enumerate}

\subsubsection{$[z]=[p^{2n+1}]$ $\mathrm{(rank\,0)}$}\mbox{}\\

A representative of the point $[z]$ can be taken as $z=(x_1,\ldots,x_{2n},0,x_{2n+2},x_{2n+3},\allowbreak x_{2n+4})\in v^{-1}(0)$ with $x_j\in \mathbb{R}$.
Before describing the calculation process, we make a few remarks. The matrices related to $\tilde{J}$, $J$, $\varphi$, $\Phi$, and $\bigl(\Phi^*\omega_{\text{std}}\bigr)_{\Phi^{-1}(z)}$ appearing in Step\,$1$ and Step\,$2$ below can be chosen to be identical to their counterparts in the preceding calculation process. Therefore, in Steps $1$ and $2$ below, we only compute the quantities related to $\tilde{H}$ and $H$.

\begin{enumerate}[label=Step\,\arabic*. ]
\item Solving the equation $dF_{\tilde{H}}(z, \bm{\lambda'}) = 0$ for $\bm{\lambda'}=(\lambda'_1,\ldots,\lambda'_{2n+2})$, we obtain
\begin{gather*}
  \lambda'_j =
  \begin{cases}
    -\dfrac{H}{|z_j|^2} &(j=1,\ldots,2n),\\[2ex]
    \sum\limits_{k=1}^n(k-1)\dfrac{H}{|z_k|^2} &(j=2n+1),\\[2ex]
    \left(-\dfrac{1}{|z_{2n+4}|^2}+\sum\limits_{k=1}^n\dfrac{1}{|z_k|^2}\right)H &(j=2n+2).
  \end{cases}
\end{gather*}

\item The $d^2F_{\tilde{H}}(z)$ is given by
\begin{gather*}
d^2F_{\tilde{H}}(z) =
\begin{pNiceMatrix}
    \Block{2-4}{A_{2n+1}}&&&&\Vdots&\Block{2-2}{O^{4n,4}}&&\Vdots&\Block{2-4}{B_{2n+1}}&&&\\
     &&&&&&&&&&& \\
     \myhdottedline\\[-10pt]
    \Block{2-4}{\raisebox{10pt}{$O^{4,4n}$}}&&&& &\Block{2-2}{\raisebox{10pt}{$C_{2n+1}$}}&& &\Block{2-4}{\raisebox{10pt}{$O^4$}}&&&\\
     &&&&&&&&&&& \\
     \myhdottedline\\[-10pt]
    \Block{2-4}{\raisebox{12pt}{${}^t\!B_{2n+1}$}}&&&& &\Block{2-2}{\raisebox{10pt}{$O^4$}}&& &\Block{2-4}{\raisebox{10pt}{$D_{2n+1}$}} &&& \\
     &&&&&&&&&&&& \\
    \end{pNiceMatrix}\text{\raisebox{-33pt}{,}}
\end{gather*}
where
\begin{align*}
  A_{2n+1}&=
  {\addtolength{\arraycolsep}{-3pt}
  \begin{pmatrix}
    -\dfrac{H}{|z_1|^2}&0&R_{1,2}&0&\cdots&R_{1,2n}&0\\
    0&-\dfrac{H}{|z_1|^2}&0&-R_{1,2}&\cdots&0&-R_{1,2n}\\
    R_{1,2}&0&-\dfrac{H}{|z_2|^2}&0&\cdots&R_{2,2n}&0\\
    0&-R_{1,2}&0&-\dfrac{H}{|z_2|^2}&\cdots&0&-R_{2,2n}\\
    \vdots&\vdots&\vdots&\vdots&\ddots&\vdots&\vdots\\
    R_{1,2n}&0&R_{2,2n}&0&\cdots&-\dfrac{H}{|z_{2n}|^2}&0\\
    0&-R_{1,2n}&0&-R_{2,2n}&\cdots&0&-\dfrac{H}{|z_{2n}|^2}
  \end{pmatrix}\text{\raisebox{-72pt}{,}}
  }\\[3ex]
   B_{2n+1}&=
  {\addtolength{\arraycolsep}{-3pt}
  \begin{pmatrix}
    R_{1,2n+3}&0&R_{1,2n+4}&0\\
    0&R_{1,2n+3}&0&R_{1,2n+4}\\
    R_{2,2n+3}&0&R_{2,2n+4}&0\\
    0&R_{2,2n+3}&0&R_{2,2n+4}\\
    \vdots&\vdots&\vdots&\vdots\\
    R_{2n,2n+3}&0&R_{2n,2n+4}&0\\
    0&R_{2n,2n+3}&0&R_{2n,2n+4}\\
  \end{pmatrix}\text{\raisebox{-40pt}{,}}
  }\hspace{4mm}
  C_{2n+1}=
  {\addtolength{\arraycolsep}{-3pt}
  \begin{pmatrix}
    \kappa_1&&&\\
    &\kappa_1&&\\
    &&0&\\
    &&&0
  \end{pmatrix}\text{\raisebox{-18pt}{,}}
  }\\[3ex]
  D_{2n+1}&=
   {\addtolength{\arraycolsep}{-3pt}
  \begin{pmatrix}
    -\dfrac{H}{|z_{2n+3}|^2}&&R_{2n+3,2n+4}&\\
    &-\dfrac{H}{|z_{2n+3}|^2}&&-R_{2n+3,2n+4}\\
    R_{2n+3,2n+4}&&-\dfrac{H}{|z_{2n+4}|^2}&\\
    &-R_{2n+3,2n+4}&&-\dfrac{H}{|z_{2n+4}|^2}
  \end{pmatrix}\text{\raisebox{-40pt}{,}}
  }\\[3ex]
  \kappa_1&=\Biggl(-\dfrac{1}{|z_{2n+4}|^2}+\sum_{k=1}^n\left(\dfrac{k}{|z_k|^2}+\dfrac{1-k}{|z_{n+k}|^2}\right)\Biggr)H.
\end{align*}
Therefore, we have 
\begin{gather*}
  d^2(\tilde{H}\circ\Phi)=
  \begin{pNiceMatrix}
    \Block{2-4}{\tilde{A}_{2n+1}}&&&&\Vdots&\Block{2-2}{O^{2n,3}}&&\Vdots&\Block{2-4}{\tilde{B}_{2n+1}}&&&\\
     &&&&&&&&&&& \\
     \myhdottedline\\[-10pt]
    \Block{2-4}{\raisebox{10pt}{$O^{3,2n}$}}&&&& &\Block{2-2}{\raisebox{10pt}{$\tilde{C}_{2n+1}$}}&& &\Block{2-4}{\raisebox{10pt}{$O^3$}}&&&\\
     &&&&&&&&&&& \\
     \myhdottedline\\[-10pt]
    \Block{2-4}{\raisebox{12pt}{${}^t\!\tilde{B}_{2n+1}$}}&&&& &\Block{2-2}{\raisebox{10pt}{$O^3$}}&& &\Block{2-4}{\raisebox{10pt}{$\tilde{D}_{2n+1}$}} &&& \\
     &&&&&&&&&&&& \\
    \end{pNiceMatrix}\text{\raisebox{-32pt}{,}}
\end{gather*}
where
\begin{align*}
\tilde{A}_{2n+1}&=
{\addtolength{\arraycolsep}{-3pt}
\begin{pmatrix}
  -\dfrac{H}{|z_1|^2}&-R_{1,2}&\cdots&-R_{1,2n}\\
  -R_{1,2}&-\dfrac{H}{|z_2|^2}&\cdots&-R_{2,2n}\\
  \vdots&\vdots&\ddots&\vdots\\
  -R_{1,2n}&-R_{2,2n}&\cdots&-\dfrac{H}{|z_{2n}|^2}
\end{pmatrix}\text{\raisebox{-36pt}{,}}
}\\[3ex]
\tilde{B}_{2n+1}&=
{\addtolength{\arraycolsep}{-3pt}
\begin{pmatrix}
  R_{1,2n+3}&0&R_{1,2n+4}\\
  R_{2,2n+3}&0&R_{2,2n+4}\\
  \vdots&\vdots&\vdots\\
  R_{2n,2n+3}&0&R_{2n,2n+4}\\
\end{pmatrix}\text{\raisebox{-21pt}{,}}
}\hspace{4mm}
\tilde{C}_{2n+1}=
{\addtolength{\arraycolsep}{-3pt}
\begin{pmatrix}
\kappa_1&&\\
&\kappa_1&\\
&&0
\end{pmatrix}\text{\raisebox{-14pt}{,}}
}\\[3ex]
\tilde{D}_{2n+1}&=
{\addtolength{\arraycolsep}{-3pt}
\begin{pmatrix}
-\dfrac{H}{|z_{2n+3}|^2}&&-R_{2n+3,2n+4}\\
&\kappa_2&\\
-R_{2n+3,2n+4}&&-\dfrac{H}{|z_{2n+4}|^2}
\end{pmatrix}\text{\raisebox{-21pt}{,}}
}\\[3ex]
\kappa_2&=-2|z_{2n+4}|^2\left(\dfrac{1}{|z_{2n+3}|^4}+\dfrac{1}{|z_{2n+4}|^4}+\sum\limits_{j=1}^{2n}\dfrac{1}{|z_j|^4}\right)H.
\end{align*}
\item Now, we can choose the following matrix whose columns form a basis of $T_{\Phi^{-1}(z)}\mathbb{R}^{2n+6}$:\vspace{2mm}
\begin{gather*}
    {\addtolength{\arraycolsep}{-3pt}
  \begin{pNiceMatrix}
    x_1&&&&&&\Vdots&&&&\\
    &x_2&&&&& &&&&\\
    &&\ddots&&&& &&&&\\
    &&&x_{2n}&&& &&&&\\
    \myhdottedline\\[-10pt]
    &&&&&& &1&&&\\
    &&&&&& &&1&&\\
    \myhdottedline\\[-10pt]
    &&&&x_{2n+2}&& &&&&\\
    &&&&&-x_{2n+3}& &&&&\\
    &&&&&& &&&1&\\
    x_{2n+4}&x_{2n+4}&\ldots&x_{2n+4}&&x_{2n+4}& &&&&1
  \end{pNiceMatrix}\text{\raisebox{-64pt}{.}}
  }\\[-0.5ex]
  \end{gather*}
  The column vectors to the left of the vertical dotted line in the matrix above form a basis of $d\Phi^{-1}(X_K)_z$. Therefore, a straightforward computation shows that $d^2J, d^2H$, and $\Omega$ are given by the following matrices:
  \begin{gather*}
  d^2J=
  {\addtolength{\arraycolsep}{-3pt}
  \begin{pmatrix}
    -1&&&\\
    &-1&&\\
    &&0&\\
    &&&0
  \end{pmatrix}
  }
  \text{\raisebox{-19pt}{,}}\hspace{5mm}
  d^2H=
  {\addtolength{\arraycolsep}{-3pt}
  \begin{pNiceMatrix}
    \kappa_1&&&\\
    &\kappa_1&&\\
    &&\kappa_2&\\
    &&&-\dfrac{H}{|z_{2n+4}|^2}
  \end{pNiceMatrix}
  }
  \text{\raisebox{-21pt}{,}}\hspace{5mm}
  \Omega=
    {\addtolength{\arraycolsep}{-3pt}
  \begin{pNiceMatrix}
    0&1&&\\
    -1&0&&\\
    &&0&1\\
    &&-1&0
  \end{pNiceMatrix}
  }\text{\raisebox{-19pt}{.}}
\end{gather*}

\item Since $\text{Im}\,f=0$, $-\dfrac{H}{|z_{2n+4}|^2}$ is non-zero. Therefore, $d^2J$ and $d^2H$ are linearly independent.

\item Depending on whether $\kappa_1=0$, we can choose the constants $a_1$ and $a_2$ appropriately so that $a_1d^2J+a_2d^2H$ is invertible. Now, we take $(a_1,a_2)=(1,1)$ if $\kappa_1=0$, and $(a_1,a_2)=(0,1)$ otherwise. In each case, $P(t)$ becomes 
\begin{align*}
  P(t)&=(t^2+1)\Biggl(t^2+2\Bigl(\dfrac{1}{|z_{2n+3}|^4}+\dfrac{1}{|z_{2n+4}|^4}+\sum\limits_{j=1}^{2n} \dfrac{1}{|z_j|^4}\Bigr)H^2\Biggr)\hspace{3mm}&(\kappa_1=0),\\[2ex]
  P(t)&=(t^2+\kappa_1^2)\Biggl(t^2+2\Bigl(\dfrac{1}{|z_{2n+3}|^4}+\dfrac{1}{|z_{2n+4}|^4}+\sum\limits_{j=1}^{2n} \dfrac{1}{|z_j|^4}\Bigr)H^2\Biggr)\hspace{3mm}&(\kappa_1\neq0).
\end{align*}

\item Therefore, in both cases, the point $[z]$ is a singularity of Elliptic-Elliptic type.
\end{enumerate}

\vspace{5mm}
\subsubsection{$[z]=[p]$} \mbox{}\\

Finally, we present the calculation process for determining the types of the singularities when $[z]=[p]$. For the necessary and sufficient condition for a point $[p]$ to be a singularity, see Proposition \ref{prop:singtype}.

We can take a representative $z$ of the point $[z]$ to be $z=(x_1,\ldots,x_{2n+2},z_{2n+3},z_{2n+4})$. Namely, we may assume that all but the $(2n+3)$-th and $(2n+4)$-th components of $z$ are real numbers. 

\begin{enumerate}[label=Step\,\arabic*. ]
\item We can choose real numbers $b_1$ and $b_2$ satisfying the equation $b_1dJ_{[z]}+b_2dH_{[z]}=0$ as
\begin{gather*}
  b_1= \Biggl(\dfrac{1}{|z_{2n+4}|^2}-\sum_{k=1}^n\left(\dfrac{k}{|z_k|^2}+\dfrac{1-k}{|z_{n+k}|^2}\right)\Biggr)H, \hspace{5mm}b_2=-1.
\end{gather*}

\item Solving the equation $dF_{\tilde{g}}(z,\lambda)=0$ for $\lambda=(\lambda_1, \ldots, \lambda_{2n+2})$, we obtain 
\begin{gather*}
  \lambda_j =
     \begin{cases}
       \dfrac{H}{|z_j|^2} & (j=1,\ldots,2n),\\[2ex]
       \sum\limits_{k=1}^n \dfrac{1-k}{|z_k|^2}H& (j=2n+1),\\[2ex]
       \sum\limits_{k=1}^n\left(\dfrac{1}{|z_k|^2}-\dfrac{1}{|z_{n+k}|^2}\right)(k-1)H & (j=2n+2).
     \end{cases}
   \end{gather*}

  \item $d^2F_{\tilde{g}}(z)$ is given as follows:
  \begin{gather*}
  d^2F_{\tilde{g}}(z)=
\begin{pNiceMatrix}
    \Block{2-4}{\tilde{A}}&&&&\Vdots&\Block{2-2}{O^{4n,4}}&&\Vdots&\Block{2-2}{\tilde{B}}&\\
     &&&&&&&&& \\
     \myhdottedline\\[-10pt]
    \Block{2-4}{\raisebox{10pt}{$O^{4,4n}$}}&&&& &\Block{2-2}{\raisebox{10pt}{$O^4$}}&& &\Block{2-2}{\raisebox{10pt}{$O^4$}}&\\
     &&&&&&&&& \\
     \myhdottedline\\[-10pt]
    \Block{2-4}{\raisebox{12pt}{${}^t\!\tilde{B}$}}&&&& &\Block{2-2}{\raisebox{10pt}{$O^4$}}&& &\Block{2-2}{\raisebox{10pt}{$\tilde{C}$}} & \\
     &&&&&&&&&& \\
    \end{pNiceMatrix}\text{\raisebox{-32pt}{,}}
  \end{gather*}
where
\begin{align*}
  \tilde{A}&=
  {\addtolength{\arraycolsep}{-3pt}
  \begin{pNiceMatrix}
  \dfrac{H}{|z_1|^2}&0&-R_{1,2}&I_{1,2}&\cdots&-R_{1,2n}&I_{1,2n}\\
  0&\dfrac{H}{|z_1|^2}&I_{1,2}&R_{1,2}& \cdots &I_{1,2n}&R_{1,2n}\\
  -R_{1,2}&I_{1,2}&\dfrac{H}{|z_2|^2}&0& \cdots &-R_{2,2n}&I_{2,2n}\\
  I_{1,2}&R_{1,2}&0&\dfrac{H}{|z_2|^2}& \cdots &I_{2,2n}&R_{2,2n}\\
  \vdots&\vdots&\vdots&\vdots&\ddots&\vdots&\vdots\\
  -R_{1,2n}&I_{1,2n}&-R_{2,2n}&I_{2,2n}& \cdots &\dfrac{H}{|z_{2n}|^2}&0\\
  I_{1,2n}&R_{1,2n}&I_{2,2n}&R_{2,2n}& \cdots &0&\dfrac{H}{|z_{2n}|^2}
  \end{pNiceMatrix}\text{\raisebox{-71pt}{,}}
  }\\[3ex]
  \tilde{B}&=
  {\addtolength{\arraycolsep}{-3pt}
  \begin{pNiceMatrix}
    -R_{1,2n+3}&-I_{1,2n+3}&-R_{1,2n+4}&-I_{1,2n+4}\\
    I_{1,2n+3}&-R_{1,2n+3}&I_{1,2n+4}&-R_{1,2n+4}\\
    -R_{2,2n+3}&-I_{2,2n+3}&-R_{2,2n+4}&-I_{2,2n+4}\\
    I_{2,2n+3}&-R_{2,2n+3}&I_{2,2n+4}&-R_{2,2n+4}\\
    \vdots&\vdots&\vdots&\vdots\\
    -R_{2n,2n+3}&-I_{2n,2n+3}&-R_{2n,2n+4}&-I_{2n,2n+4}\\
    I_{2n,2n+3}&-R_{2n,2n+3}&I_{2n,2n+4}&-R_{2n,2n+4}\\
  \end{pNiceMatrix}\text{\raisebox{-39pt}{,}}
  }\\[3ex]
  \tilde{C}&=
  {\addtolength{\arraycolsep}{-3pt}
  \begin{pNiceMatrix}
  \dfrac{H}{|z_{2n+3}|^2}&&-R_{2n+3,2n+4}&\\
  &\dfrac{H}{|z_{2n+3}|^2}&&R_{2n+3,2n+4}\\
  -R_{2n+3,2n+4}&&\dfrac{H}{|z_{2n+4}|^2}&\\
  &R_{2n+3,2n+4}&&\dfrac{H}{|z_{2n+4}|^2}
  \end{pNiceMatrix}\text{\raisebox{-40pt}{.}}
  }
\end{align*}
 Next, we compute $d\varphi_{\Phi^{-1}(z)}$. Since the matrix $\Bigl(\pdv{v}{\eta}\Bigr)$ obtained by collecting all non-zero column vectors of $dv_z$ excluding the last four columns is invertible, the implicit function theorem implies that there exists a map $\varphi=(\varphi_1,\ldots,\varphi_{2n+2})$ from an open neighborhood of $(0,\ldots,0, x_{2n+3},y_{2n+3},x_{2n+4},\allowbreak y_{2n+4})\in \mathbb{R}^{2n+6}$ to an open neighborhood of $(x_1,\ldots,x_{2n+2})\in \mathbb{R}^{2n+2}$ such that the map $\Phi=(\varphi_1,\xi_1,\ldots,\varphi_{2n+2},\xi_{2n+2},\xi_{2n+3},\xi_{2n+4},\xi_{2n+5},\xi_{2n+6})$ provides a chart of $v^{-1}(0)$ around the point $z$. Moreover, letting $\Bigl(\pdv{v}{\xi}\Bigr)$ be the matrix obtained by removing $\Bigl(\pdv{v}{\eta}\Bigr)$ from the matrix $dv_z$, since $d\varphi_{\Phi^{-1}(z)} = -\left(\pdv{v}{y}\right)^{-1}\Bigl(\pdv{v}{\xi}\Bigr)$ (as mentioned before Lemma \ref{lem:laghessian}), we obtain $d\varphi_{\Phi^{-1}(z)}=\bigl(O^{2n+2}\,\text{\raisebox{-2pt}{\vdots}} \,B_{\varphi}\bigr)$, where $B_\varphi$ is the following $(2n+2)$-by-$4$ matrix:
 \begin{gather*}
 B_{\varphi}=
  {\addtolength{\arraycolsep}{-3pt}
   \begin{pNiceMatrix}
     0&0&-\dfrac{x_{2n+4}}{x_1}&-\dfrac{y_{2n+4}}{x_1}\\
     \dfrac{x_{2n+3}}{x_2}&\dfrac{y_{2n+3}}{x_2}&-\dfrac{2x_{2n+4}}{x_2}&-\dfrac{2y_{2n+4}}{x_2}\\
     \vdots&\vdots&\vdots&\vdots\\
     \dfrac{(j-1)x_{2n+3}}{x_j}&\dfrac{(j-1)y_{2n+3}}{x_j}&-\dfrac{jx_{2n+4}}{x_j}&-\dfrac{jy_{2n+4}}{x_j}\\
     \vdots&\vdots&\vdots&\vdots\\
     \dfrac{(n-1)x_{2n+3}}{x_n}&\dfrac{(n-1)y_{2n+3}}{x_n}&-\dfrac{nx_{2n+4}}{x_n}&-\dfrac{ny_{2n+4}}{x_n}\\[2ex]
     -\dfrac{x_{2n+3}}{x_{n+1}}&-\dfrac{y_{2n+3}}{x_{n+1}}&0&0\\
     -\dfrac{2x_{2n+3}}{x_{n+2}}&-\dfrac{2y_{2n+3}}{x_{n+2}}&\dfrac{x_{2n+4}}{x_{n+2}}&\dfrac{y_{2n+4}}{x_{n+2}}\\
     \vdots&\vdots&\vdots&\vdots\\
     -\dfrac{jx_{2n+3}}{x_{n+j}}&-\dfrac{jy_{2n+3}}{x_{n+j}}&\dfrac{(j-1)x_{2n+4}}{x_{n+j}}&\dfrac{(j-1)y_{2n+4}}{x_{n+j}}\\
     \vdots&\vdots&\vdots&\vdots\\
     -\dfrac{nx_{2n+3}}{x_{2n}}&-\dfrac{ny_{2n+3}}{x_{2n}}&\dfrac{(n-1)x_{2n+4}}{x_{2n}}&\dfrac{(n-1)y_{2n+4}}{x_{2n}}\\[2ex]
     \dfrac{x_{2n+3}}{x_{2n+1}}&\dfrac{y_{2n+3}}{x_{2n+1}}&-\dfrac{x_{2n+4}}{x_{2n+1}}&-\dfrac{y_{2n+4}}{x_{2n+1}}\\[2ex]
     -\dfrac{x_{2n+3}}{x_{2n+2}}&-\dfrac{y_{2n+3}}{x_{2n+2}}&\dfrac{x_{2n+4}}{x_{2n+2}}&\dfrac{y_{2n+4}}{x_{2n+2}}
   \end{pNiceMatrix}\text{\raisebox{-145pt}{.}}
   }
 \end{gather*}
 Therefore, we also have
 \begin{gather*}
   d^2(\tilde{g}\circ\Phi)_{(z)}=A_{\Phi}+B_{\Phi}+{}^t\!B_{\Phi}+C_{\Phi},
 \end{gather*}
 where
 \begin{align*}
   A_{\Phi}&=
   {\addtolength{\arraycolsep}{-3pt}
   \begin{pNiceMatrix}
     \dfrac{H}{|z_1|^2}&R_{1,2}&\cdots&R_{1,2n}&\Vdots&&&&&\Vdots&I_{1,2n+3}&-R_{1,2n+3}&I_{1,2n+4}&-R_{1,2n+4}\\
     R_{1,2}&\dfrac{H}{|z_2|^2}&\cdots&R_{2,2n}& &&&&& &I_{2,2n+3}&-R_{2,2n+3}&I_{2,2n+4}&-R_{2,2n+4}\\
     \vdots&\vdots&\ddots&\vdots& &&&&& &\vdots&\vdots&\vdots&\vdots\\
     R_{1,2n}&R_{2,2n}&\cdots&\dfrac{H}{|z_{2n}|^2}& &&&&& &I_{2n,2n+3}&-R_{2n,2n+3}&I_{2n,2n+4}&-R_{2n,2n+4}\\
     \myhdottedline\\[-10pt]
     &&&& &\Block{2-4}{\raisebox{10pt}{$O^2$}}&&& & &&&&\\
     &&&& &&&& & &&&&\\
     \myhdottedline\\[-10pt]
     I_{1,2n+3}&I_{2,2n+3}&\cdots&I_{2n,2n+3}& &&&&& &\dfrac{H}{|z_{2n+3}|^2}&&-R_{2n+3,2n+4}&\\
     -R_{1,2n+3}&-R_{2,2n+3}&\cdots&-R_{2n,2n+3}& &&&&& &&\dfrac{H}{|z_{2n+3}|^2}&&R_{2n+3,2n+4}\\
     I_{1,2n+4}&I_{2,2n+4}&\cdots&I_{2n,2n+4}& &&&&& &-R_{2n+3,2n+4}&&\dfrac{H}{|z_{2n+4}|^2}&\\
     -R_{1,2n+4}&-R_{2,2n+4}&\cdots&-R_{2n,2n+4}& &&&&& &&R_{2n+3,2n+4}&&\dfrac{H}{|z_{2n+4}|^2}   \end{pNiceMatrix}\text{\raisebox{-95pt}{,}}
   }\\
   B_{\Phi}&=
   {\addtolength{\arraycolsep}{-3pt}
   \begin{pNiceMatrix}
     \Block{2-7}{O^{2n+2}}&&&&&&&\Vdots&&&&\\
     &&&&&&&&&&&\\
     \myhdottedline\\[-10pt]
     &&&&&&&&x_{2n+3}P_{R,2n+3}&y_{2n+3}P_{R,2n+3}&x_{2n+4}Q_{R,2n+3}&y_{2n+4}Q_{R,2n+3}\\
     &&&&&&&&x_{2n+3}P_{I,2n+3}&y_{2n+3}P_{I,2n+3}&x_{2n+4}Q_{I,2n+3}&y_{2n+4}Q_{I,2n+3}\\
     &&&&&&&&x_{2n+3}P_{R,2n+4}&y_{2n+3}P_{R,2n+4}&x_{2n+4}Q_{R,2n+4}&y_{2n+4}Q_{R,2n+4}\\
     &&&&&&&&x_{2n+3}P_{I,2n+4}&y_{2n+3}P_{I,2n+4}&x_{2n+4}Q_{I,2n+4}&y_{2n+4}Q_{I,2n+4}\\
   \end{pNiceMatrix}\text{\raisebox{-32pt}{,}}
   }\\[3ex]
   P_{R,i}&=\sum\limits_{j=1}^n\left(\dfrac{1-j}{x_j}R_{i,j}+\dfrac{j}{x_{n+j}}R_{i,n+j}\right)\text{\raisebox{-6pt}{,}}
   \hspace{5mm}
   Q_{R,i}=\sum\limits_{j=1}^n\left(\dfrac{j}{x_j}R_{i,j}+\dfrac{1-j}{x_{n+j}}R_{i,n+j}\right)\text{\raisebox{-6pt}{,}}\\
   P_{I,i}&=\sum\limits_{j=1}^n\left(\dfrac{1-j}{x_j}I_{i,j}+\dfrac{j}{x_{n+j}}I_{i,n+j}\right)\text{\raisebox{-6pt}{,}}
   \hspace{5mm}
   Q_{I,i}=\sum\limits_{j=1}^n\left(\dfrac{j}{x_j}I_{i,j}+\dfrac{1-j}{x_{n+j}}I_{i,n+j}\right)\text{\raisebox{-6pt}{,}}
   \vspace{5mm}\\[3ex]
   C_{\Phi}&=
   {\addtolength{\arraycolsep}{-3pt}
   \begin{pNiceMatrix}
     \Block{2-7}{O^{2n+2}}&&&&&&&\Vdots&&&&\\
     &&&&&&&&&&&\\
     \myhdottedline\\[-10pt]
     &&&&&&&&x_{2n+3}^2\alpha&x_{2n+3}y_{2n+3}\alpha&x_{2n+3}x_{2n+4}\beta&x_{2n+3}y_{2n+4}\beta\\
     &&&&&&&&x_{2n+3}y_{2n+3}\alpha&y_{2n+3}^2\alpha&y_{2n+3}x_{2n+4}\beta&y_{2n+3}y_{2n+4}\beta\\
     &&&&&&&&x_{2n+3}x_{2n+4}\beta&y_{2n+3}x_{2n+4}\beta&x_{2n+4}^2\gamma&x_{2n+4}y_{2n+4}\gamma\\
     &&&&&&&&x_{2n+3}y_{2n+4}\beta&y_{2n+3}y_{2n+4}\beta&x_{2n+4}y_{2n+4}\gamma&y_{2n+4}^2\gamma
   \end{pNiceMatrix}\text{\raisebox{-32pt}{,}}
     }\\[3ex]
     \alpha&=\sum\limits_{i,j=1}^n\Biggl(\dfrac{j-1}{x_j}\left(\dfrac{i-1}{x_i}(-R_{i,j})+\dfrac{i}{x_{n+i}}R_{n+i,j}\right)+\dfrac{j}{x_{n+j}}\left(\dfrac{i-1}{x_i}R_{i,n+j}+\dfrac{i}{x_{n+i}}(-R_{n+i,n+j})\right)\Biggr)\text{\raisebox{-10pt}{,}}\\[3ex]
     \beta&=\sum\limits_{i,j=1}^n\Biggl(\dfrac{1-j}{x_j}\left(\dfrac{i}{x_i}(-R_{i,j})+\dfrac{i-1}{x_{n+i}}R_{n+i,j}\right)-\dfrac{j}{x_{n+j}}\left(\dfrac{i}{x_i}R_{i,n+j}+\dfrac{i-1}{x_{n+i}}(-R_{n+i,n+j})\right)\Biggr)\text{\raisebox{-10pt}{,}}\\[3ex]
     \gamma&=\sum\limits_{i,j=1}^n\Biggl(\dfrac{j}{x_j}\left(\dfrac{i}{x_i}(-R_{i,j})+\dfrac{i-1}{x_{n+i}}R_{n+i,j}\right)+\dfrac{j-1}{x_{n+j}}\left(\dfrac{i}{x_i}R_{i,n+j}+\dfrac{i-1}{x_{n+i}}(-R_{n+i,n+j})\right)\Biggr)\text{\raisebox{-10pt}{.}}
 \end{align*}
 Note that in the above equations for $\alpha, \beta, \gamma$, we set $-R_{i,i}=\dfrac{H}{|z_i|^2}$.
 The pullback of $\omega_{\text{std}}$ is given by
 \begin{gather*}
 (\Phi^*\omega_{\text{std}})_{\Phi^{-1}(z)}=
 {\addtolength{\arraycolsep}{-3pt}
 \begin{pNiceMatrix}
   \Block{2-7}{O^{2n+2}}&&&&&&&\Vdots&\Block{2-4}{-B_{\varphi}}&&&\\
   &&&&&&& &&&&\\
   \myhdottedline\\[-10pt]
   \Block{4-7}{\raisebox{10pt}{${}^t\!B_{\varphi}$}}&&&&&&& &0&1&&\\
   &&&&&&& &-1&0&&\\
   &&&&&&& &&&0&1\\
   &&&&&&& &&&-1&0
 \end{pNiceMatrix}\text{\raisebox{-35pt}{.}}
 }
 \end{gather*}
 \item The subspace $L_{[z]}\subset T_{[z]}M_n$ is spanned by $X_J([z])$. Now, consider the following three matrices $(B_a)_1,\allowbreak (B_a)_2,(B_a)_3$:\vspace{3mm}
 \begin{align*}
 (B_a)_1&=
 {\addtolength{\arraycolsep}{-4pt}
 \begin{pNiceMatrix}
   x_1&&&&&\\
   &x_2&&&&\\
   &&\ddots&&&\\
   &&&x_j&&\\
   &&&&\ddots&\\
   &&&&&x_n\\
   \myhdottedline\\[0pt]
   \Block{1-6}{O_{n+1,n}}&&&&&\\
   &&&&&\\
   \myhdottedline\\[-10pt]
   0&x_{2n+2}&\cdots&(j-1)x_{2n+2}&\cdots&(n-1)x_{2n+2}\\
   \myhdottedline\\[0pt]
   \Block{1-6}{O_{2,n}}&&&&&\\
   &&&&&\\
   \myhdottedline\\[-10pt]
   -y_{2n+4}&-y_{2n+4}&\cdots&-y_{2n+4}&\cdots&-y_{2n+4}\\
   x_{2n+4}&x_{2n+4}&\cdots&x_{2n+4}&\cdots&x_{2n+4}\\
 \end{pNiceMatrix}\text{\raisebox{-92pt}{,}}
 }\\[3ex]
 (B_a)_2&=
 {\addtolength{\arraycolsep}{-4pt}
 \begin{pNiceMatrix}
  \Block{2-8}{O_{n,n+2}}&&&&&&&\\
   &&&&&&&\\
   \myhdottedline\\[-10pt]
   x_{n+1}&&&&&&&\\
   &x_{n+2}&&&&&&\\
   &&\ddots&&&&&\\
   &&&x_{n+j}&&&&\\
   &&&&\ddots&&&\\
   &&&&&x_{2n}&&\\
   0&x_{2n+1}&\cdots&(j-1)x_{2n+1}&\cdots&(n-1)x_{2n+1}&x_{2n+1}&x_{2n+1}\\
   &&&&&&x_{2n+2}&\\
   -y_{2n+3}&-y_{2n+3}&\cdots&-y_{2n+3}&\cdots&-y_{2n+3}&&y_{2n+3}\\
   x_{2n+3}&x_{2n+3}&\cdots&x_{2n+3}&\cdots&x_{2n+3}&&-x_{2n+3}\\
   &&&&&&&-y_{2n+4}\\
   &&&&&&&x_{2n+4}
 \end{pNiceMatrix}\text{\raisebox{-86pt}{,}}
 }\\[3ex]
 (B_a)_3&=
 {\addtolength{\arraycolsep}{-4pt}
 \begin{pNiceMatrix}
   \Block{4-4}{O_{2n+2,4}}&&&\\
   &&&\\
   &&&\\
   &&&\\
   \myhdottedline\\[-10pt]
   y_{2n+3}&x_{2n+3}&&\\
   -x_{2n+3}&y_{2n+3}&&\\
   -y_{2n+4}&&x_{2n+4}&-y_{2n+4}\\
   x_{2n+4}&&y_{2n+4}&x_{2n+4}
 \end{pNiceMatrix}\text{\raisebox{-43pt}{,}}
 }\\
 \end{align*}
 Then, the column vectors of the matrix $B_a=\bigl((B_a)_1\,(B_a)_2\,(B_a)_3\bigr)$ form a basis for $T_{\Phi^{-1}(z)}\mathbb{R}^{2n+6}$. Moreover, the column vectors excluding those of $(B_a)_3$ form a basis for $d\Phi^{-1}(X_K)_z$, and the first column vector of $(B_a)_3$ is $X_{\tilde{J}}(z)$.
With respect to this basis of $T_{\Phi^{-1}(z)}\mathbb{R}^{2n+6}$, the $4$-by-$4$ submatrices, $d^2G$ and $\Omega$, from the $(2n+3,2n+3)$ to $(2n+6,2n+6)$ entries of the matrices representing $d^2(\tilde{g}\circ\Phi)_{\Phi^{-1}(z)}$ and $(\Phi^*\omega_{\text{std}})_{\Phi^{-1}(z)}$, respectively, are given as follows:
\begin{gather*}
d^2G=A_G+B_G+{}^tB_G+C_G,
\end{gather*}
where\vspace{3mm}
\begin{gather*}
 A_G=
 {\addtolength{\arraycolsep}{-2pt}
 \begin{pmatrix}
   0&&&\\
   &H&-H&\\
   &-H&H&\\
   &&&H
 \end{pmatrix}
 }\text{\raisebox{-18pt}{,}}\\[3ex]
 B_G=
 {\addtolength{\arraycolsep}{-2pt}
 \begin{pmatrix}
   0&&&\\
&|z_{2n+3}|^2H\sum\limits_{j=1}^n\left(\dfrac{1-j}{|z_j|^2}+\dfrac{j}{|z_{n+j}|^2}\right)&|z_{2n+4}|^2H\sum\limits_{j=1}^n\left(\dfrac{j}{|z_j|^2}+\dfrac{1-j}{|z_{n+j}|^2}\right)&\\
   &|z_{2n+3}|^2H\sum\limits_{j=1}^n\left(\dfrac{1-j}{|z_j|^2}+\dfrac{j}{|z_{n+j}|^2}\right)&|z_{2n+4}|^2H\sum\limits_{j=1}^n\left(\dfrac{j}{|z_j|^2}+\dfrac{1-j}{|z_{n+j}|^2}\right)&\\
   &&&0
 \end{pmatrix}\text{\raisebox{-30pt}{,}}
 }\\[3ex]
 C_G=
 {\addtolength{\arraycolsep}{-2pt}
  \begin{pmatrix}
    0&&&\\
    &|z_{2n+3}|^4\alpha&|z_{2n+3}|^2|z_{2n+4}|^2\beta&\\
    &|z_{2n+3}|^2|z_{2n+4}|^2\beta&|z_{2n+4}|^2\gamma&\\
    &&&0
  \end{pmatrix}
  }\text{\raisebox{-17pt}{,}}
  \\[3ex]
  \Omega=
  {\addtolength{\arraycolsep}{-2pt}
  \begin{pmatrix}
    &|z_{2n+3}|^2&-|z_{2n+4}|^2&\\
    -|z_{2n+3}|^2&&&\\
    |z_{2n+4}|^2&&&|z_{2n+4}|^2\\
    &&-|z_{2n+4}|^2&
  \end{pmatrix}\text{\raisebox{-20pt}{.}}
  }\\
\end{gather*}

\item Consider the following matrix:\vspace{3mm}
\begin{gather*}
  {\addtolength{\arraycolsep}{-2pt}
  \begin{pmatrix}
    1&&&\\
    &&|z_{2n+4}|^2&-|z_{2n+3}|^2\\
    &&|z_{2n+3}|^2&|z_{2n+4}|^2\\
    &1&&
  \end{pmatrix}\text{\raisebox{-20pt}{.}}
  }\\
\end{gather*}
While the set of column vectors of this matrix forms a basis of $\mathbb{R}^4$, the first three column vectors form a basis of $L_{[z]}^{\omega}$ and the first column vector spans $L_{[z]}$.
The submatrices consisting of the $(2,2)$ to $(3,3)$ entries of the matrices obtained by pulling back $d^2G$ and $\Omega$ by this basis are given as follows:
\begin{gather*}
\widehat{d^2G}=
  {\addtolength{\arraycolsep}{-2pt}
  \begin{pmatrix}
    H&0\\
    0&c
  \end{pmatrix}\text{\raisebox{-7pt}{,}}
  }\hspace{5mm}
  \widehat{\Omega}=
  {\addtolength{\arraycolsep}{-2pt}
  \begin{pmatrix}
    0&-|z_{2n+3}|^2|z_{2n+4}|^2\\
    |z_{2n+3}|^2|z_{2n+4}|^2&0
  \end{pmatrix}\text{\raisebox{-7pt}{,}}
  }
\end{gather*}
where 
\begin{equation*}
c=2H|z_{2n+3}|^4|z_{2n+4}|^4\left(\dfrac{1}{|z_{2n+3}|^4}+\dfrac{1}{|z_{2n+4}|^4}+\sum_{k=1}^{2n}\dfrac{1}{|z_{k}|^4}\right)\text{\raisebox{-8pt}{.}} 
\end{equation*}
Then the characteristic equation $P(t)$ is 
\begin{equation*}
P(t)=t^2+2\left(\dfrac{1}{|z_{2n+3}|^4}+\dfrac{1}{|z_{2n+4}|^4}+\sum_{k=1}^{2n}\dfrac{1}{|z_{k}|^4}\right)H^2.
\end{equation*}
\item Therefore, the singularity $[p]$ is an Elliptic-Regular type.
\end{enumerate}

\bibliographystyle{amsplain} 
\bibliography{references}
\end{document}